\documentclass[12pt,british]{article}
\usepackage{amsmath,amsfonts,amssymb,amscd,amsthm}
\usepackage{babel}

\usepackage{ucs} 
\usepackage[utf8x]{inputenc} 
\usepackage[T1]{fontenc} 

\usepackage[noBBpl]{mathpazo}
\usepackage{textcomp}
\usepackage{eucal}
\usepackage{enumerate}
\usepackage{mathabx}

\addtolength{\oddsidemargin}{-1cm}
\setlength{\textwidth}{16truecm}

\usepackage[matrix,arrow]{xy}



\setlength{\topmargin}{-0.5truecm}

\renewcommand{\epsilon}{\ensuremath{\varepsilon}}
\renewcommand{\to}{\ensuremath{\longrightarrow}}
\newcommand{\rp}[1][2]{\ensuremath{{\mathbb R}P^{#1}}}

\newcommand{\N}{\ensuremath{\mathbb N}}
\newcommand{\Z}{\ensuremath{\mathbb Z}}
\newcommand{\dt}{\ensuremath{\mathbb D}^{\,2}}
\newcommand{\St}[1][2]{\ensuremath{\mathbb S}^{#1}}

\newcommand{\sn}[1][n]{\ensuremath{S_{{#1}}}}
\renewcommand{\ker}[1]{\ensuremath{\operatorname{\text{Ker}}({#1})}}

\makeatletter
\def\@enum@{\list{\csname label\@enumctr\endcsname}%
           {\usecounter{\@enumctr}\def\makelabel##1{
\normalfont\ignorespaces\emph{{##1}~}}
\setlength{\labelsep}{3pt}
\setlength{\parsep}{0pt}
\setlength{\itemsep}{0pt}
\setlength{\leftmargin}{0pt}
\setlength{\labelwidth}{0pt}
\setlength{\listparindent}{\parindent}%
\setlength{\itemsep}{0pt}
\setlength{\itemindent}{0pt}
\setlength{\topsep}{3pt plus 1pt minus 1 pt}}}
\renewcommand\theenumi{\@alph\c@enumi}
\renewcommand\theenumii{\@alph\c@enumii}
\renewcommand\theenumiii{\@alph\c@enumiii}
\renewcommand\theenumiv{\@alph\c@enumiv}

\makeatother

\makeatletter
\def\@map#1#2[#3]{\mbox{$#1 \colon\thinspace #2 \to #3$}}
\def\map#1#2{\@ifnextchar [{\@map{#1}{#2}}{\@map{#1}{#2}[#2]}}
\DeclareRobustCommand*\textsubscript[1]{\@textsubscript{\selectfont#1}}
\def\@textsubscript#1{{\m@th\ensuremath{_{\mbox{\fontsize\sf@size\z@#1}}}}}
\makeatother

\newcommand{\id}{\ensuremath{\operatorname{\text{Id}}}}

\DeclareRobustCommand*{\up}[1]{\textsuperscript{#1}}
\renewcommand{\th}{\ensuremath{\up{th}}}

\newcommand{\garside}[1][n]{\ensuremath{\Delta_{#1}}}
\newcommand{\ft}[1][n]{\ensuremath{\Delta^2_{#1}}}

\newcommand{\brak}[1]{\ensuremath{\left\{ #1 \right\}}}
\newcommand{\ang}[1]{\ensuremath{\left\langle #1\right\rangle}}

\newcommand{\setangr}[2]{\ensuremath{\ang{#1 \,\left\lvert \, #2 \right.}}}

\newcommand{\setr}[2]{\ensuremath{\brak{#1 \,\left\lvert \, #2 \right.}}}
\newcommand{\setl}[2]{\ensuremath{\brak{\left. #1 \,\right\rvert \, #2}}}

\newcommand{\an}[1][n]{\ensuremath{A_{{#1}}}}

\newcommand{\dih}[1]{\ensuremath{\operatorname{\text{Dih}}_{#1}}}
\newcommand{\dic}[1]{\ensuremath{\operatorname{\text{Dic}}_{#1}}}
\newcommand{\quat}[1][8]{\ensuremath{\mathcal{Q}_{#1}}}

\newcommand{\tonestar}{\ensuremath{\operatorname{T}^{\ast}}}
\newcommand{\oonestar}{\ensuremath{\operatorname{O}^{\ast}}}
\newcommand{\istar}{\ensuremath{\operatorname{I}^{\ast}}}

\newcommand{\reth}[1]{Theorem~\protect\ref{th:#1}}
\newcommand{\relem}[1]{Lemma~\protect\ref{lem:#1}}
\newcommand{\repr}[1]{Proposition~\protect\ref{prop:#1}}
\newcommand{\reco}[1]{Corollary~\protect\ref{cor:#1}}
\newcommand{\resec}[1]{Section~\protect\ref{sec:#1}}
\newcommand{\req}[1]{equation~(\protect\ref{eq:#1})}
\newcommand{\reqref}[1]{(\protect\ref{eq:#1})}

\newtheoremstyle{theoremm}{}{}{\itshape}{}{\scshape}{.}{ }{}

\theoremstyle{theoremm}
\newtheorem{thm}{Theorem}
\newtheorem{lem}[thm]{Lemma}
\newtheorem{prop}[thm]{Proposition}
\newtheorem{cor}[thm]{Corollary}

\newtheoremstyle{remarkk}{}{}{}{}{\scshape}{.}{ }{}

\theoremstyle{remarkk}

\newtheorem{rem}[thm]{Remark}

\newcommand{\sii}[2][1]{\ensuremath{\sigma_{#2}^{-{#1}}}}

\newcommand{\si}[2][{}]{\ensuremath{\sigma_{#2}^{#1}}}
\newcommand{\rh}[2][{}]{\ensuremath{\rho_{#2}^{#1}}}
\newcommand{\rhi}[2][1]{\ensuremath{\rho_{#2}^{-{#1}}}}

\newcommand{\ssni}[2]{\ensuremath{\sigma_{#1}^{-1}\cdots\sigma_{#2}^{-1}}}

\begin{document}

\title{Embeddings of the braid groups of covering spaces, classification of the finite subgroups of the braid groups of the real projective plane, and linearity of braid groups of low-genus surfaces}

\author{DACIBERG~LIMA~GON\c{C}ALVES\\
Departamento de Matem\'atica - IME-USP,\\
Caixa Postal~66281~-~Ag.~Cidade de S\~ao Paulo,\\ 
CEP:~ 05314-970 - S\~ao Paulo - SP - Brazil.\\
e-mail:~\texttt{dlgoncal@ime.usp.br}\vspace*{4mm}\\
JOHN~GUASCHI\\
Laboratoire de Math\'ematiques Nicolas Oresme UMR CNRS~\textup{6139},\\
Universit\'e de Caen BP 5186,
14032 Caen Cedex, France,\\
and\\
Instituto de Matem\'aticas, UNAM,
Le\'on \#2, altos, col.\ centro,\\
Oaxaca de Ju\'arez, Oaxaca,
C.P. 68000, Mexico.\\
e-mail:~\texttt{guaschi@math.unicaen.fr}}

\date{9th June 2009}

\begingroup
\renewcommand{\thefootnote}{}
\footnotetext{2000 AMS Subject Classification: 20F36 (primary); 20E28, 20F50, 57M10, 20H20 (secondary).}
\endgroup 



\maketitle

\vspace*{-4mm}

\begin{abstract}
Let $M$ be a compact, connected surface, possibly with a finite set of points removed from its interior. Let $d,n\in \N$, and let $\widetilde{M}$ be a $d$-fold covering space of $M$.  We show that the covering map induces an embedding of the $n\up{th}$ braid group $B_{n}(M)$ of $M$ in the $dn\up{th}$ braid group $B_{dn}(\widetilde{M})$ of $\widetilde{M}$, and give several applications of this result. First, we classify the finite subgroups of the $n\up{th}$ braid group of the real projective plane, from which we deduce an alternative proof of the classification of the finite subgroups of the mapping class group of the $n$-punctured real projective plane due to Bujalance, Cirre and Gamboa. Secondly, using the linearity 
of $B_{n}$ due to Bigelow and Krammer, we show that the braid groups of compact, connected surfaces of low genus are linear.
\end{abstract}


\section{Introduction}

The braid groups $B_n$ of the plane were introduced by E.~Artin in~1925~\cite{A1,A2}. Braid groups of surfaces were studied by Zariski~\cite{Z}. They were later generalised by Fox to braid groups of arbitrary topological spaces via the following definition~\cite{FoN}. Let $M$ be a compact, connected surface, and
let $n\in\N$. We denote the set of all ordered $n$-tuples of distinct points of $M$, known as the \emph{$n\th$ configuration space of $M$}, by:
\begin{equation*}
F_n(M)=\setr{(p_1,\ldots,p_n)}{\text{$p_i\in M$ and $p_i\neq p_j$ if $i\neq j$}}.
\end{equation*}
Configuration spaces play an important r\^ole in several branches of mathematics and have been
extensively studied, see~\cite{CG,FH} for example. 

The symmetric group $\sn$ on $n$ letters acts freely on $F_n(M)$ by permuting coordinates. The corresponding quotient will be denoted by $D_n(M)$, and will be termed the \emph{$n\th$ permuted configuration space of $M$}. The \emph{$n\th$ pure braid group $P_n(M)$} (respectively the \emph{$n\th$ braid group $B_n(M)$}) is defined to be the fundamental group of $F_n(M)$ (respectively of $D_n(M)$). If $N$ is a subsurface of $M$ and $m\geq 0$, Paris and Rolfsen study the homomorphism $B_{n}(N)\to B_{n+m}(M)$ of braid groups induced by inclusion of $N$ in $M$, and give necessary and sufficient conditions for it to be injective~\cite{PR}. In this paper, we study the relationship between the braid groups of covering spaces. In \resec{inject}, we prove the following result which we hope will be useful in the understanding of various aspects of surface braid groups, especially the relationship between the braid groups of a non-orientable surface and those of its orientable double covering.

\begin{thm}\label{th:covering}
Let $M$ be a compact, connected surface, possibly with a finite set of points removed from its interior. Let $d,n\in \N$, and let $\widetilde{M}$ be a $d$-fold covering space of $M$.  Then the covering map induces an embedding of the $n\up{th}$ braid group $B_{n}(M)$ of $M$ in the $dn\up{th}$ braid group $B_{dn}(\widetilde{M})$ of $\widetilde{M}$.
\end{thm}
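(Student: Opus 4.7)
The natural candidate for the induced map is the one at the level of configuration spaces: define $\phi \colon D_n(M) \to D_{dn}(\widetilde{M})$ by $\phi(X) = p^{-1}(X)$, where $p \colon \widetilde{M} \to M$ is the given covering. Since $p$ is $d$-to-one and the $n$ points of $X$ are distinct, $p^{-1}(X)$ is a $dn$-element subset of $\widetilde{M}$, so $\phi$ is well defined; local triviality of $p$ makes $\phi$ continuous, and it is routine to check that $\phi$ is a topological embedding. Taking fundamental groups yields the homomorphism $\phi_* \colon B_n(M) \to B_{dn}(\widetilde{M})$, so the content of the theorem is the injectivity of $\phi_*$. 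Geometrically, if $[\beta] \in B_n(M)$ is represented by an $n$-string braid in $M \times [0,1]$, then $\phi_*[\beta]$ is obtained by lifting each string through its $d$ starting preimages in $\widetilde{M}$ to produce $dn$ disjoint strings in $\widetilde{M} \times [0,1]$. In the case that $\widetilde{M} \to M$ is regular with finite deck group $G$, the image $\phi(D_n(M))$ is precisely the setwise $G$-fixed locus of $D_{dn}(\widetilde{M})$, and $\phi$ identifies $D_n(M)$ with that locus.

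To prove injectivity, the plan is first to reduce to the regular case: passing to the Galois closure $\widetilde{M}' \to M$ factors $p$ through a further covering $\widetilde{M}' \to \widetilde{M}$, and the candidate map $B_n(M) \to B_{d'n}(\widetilde{M}')$ factors through $\phi_*$, so injectivity of the outer composition will imply injectivity of $\phi_*$. In the regular setting, if $\phi_*[\beta] = 1$, choose a null-homotopy $\widetilde{H}$ of the representing loop in $D_{dn}(\widetilde{M})$; the goal is to deform $\widetilde{H}$ so that its trace lies in the $G$-fixed locus $\phi(D_n(M))$, at which point composing with $\phi^{-1}$ produces a null-homotopy of $\beta$ in $D_n(M)$. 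The base case $n = 1$ can be verified directly: a null-homotopy of $p^{-1}(\gamma)$ in $D_d(\widetilde{M})$ lifts through the $S_d$-covering $F_d(\widetilde{M}) \to D_d(\widetilde{M})$ to a null-homotopic loop in $F_d(\widetilde{M})$, forcing each of the $d$ lifts of $\gamma$ to be contractible in $\widetilde{M}$, whence $[\gamma] = 1$ in $\pi_1(M)$ by injectivity of $p_*$.

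The principal obstacle is precisely the deformation of $\widetilde{H}$ into the $G$-fixed locus, since no obvious averaging procedure is available on point configurations. One approach would be a transversality or general-position argument for disks in $\widetilde{M} \times [0,1]$ bounding the trivial lifted braid, trying to equivariantly modify them. An alternative, which is probably more robust, is an induction on $n$ using the Fadell--Neuwirth fibration $F_n(M) \to F_{n-1}(M)$ and its counterpart $F_{dn}(\widetilde{M}) \to F_{d(n-1)}(\widetilde{M})$ obtained by forgetting a full $G$-orbit of strings; these are compatible under $\phi$, and the inductive step then reduces injectivity to the $n=1$ case over the punctured surfaces $M \setminus \{n-1 \text{ points}\}$ and $\widetilde{M} \setminus \{d(n-1) \text{ points}\}$, combined with the inductive hypothesis on $B_{n-1}(M)$, the two pieces being linked by the five-lemma applied to the Fadell--Neuwirth long exact sequences.
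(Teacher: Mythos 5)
Your overall plan coincides with the paper's: the same preimage map on permuted configuration spaces, a direct lifting argument through $F_d(\widetilde{M})\to D_d(\widetilde{M})$ for $n=1$, and an induction on $n$ via Fadell--Neuwirth fibrations glued by the five-lemma (your Galois-closure reduction is a reasonable extra normalisation that the paper does not make explicit). But as written there are two genuine gaps. The first is that the induction you describe lives entirely at the level of ordered configuration spaces, so at best it proves injectivity on $P_n(M)$, not on $B_n(M)$; nothing in the proposal bridges that difference. The paper closes it with a separate, short argument: any $x\in\ker{\psi_{n\#}}$ is written as $x=yw$ with $y\in P_n(M)$ and $w$ a permutation braid supported in a small disc; the permutation of $\psi_{n\#}(y)$ preserves each block $\{i,i+n,\ldots,i+(d-1)n\}$ while that of $\psi_{n\#}(w^{-1})$ preserves each block $\{jn+1,\ldots,jn+n\}$, so their equality forces both to be trivial, whence $w=\id$ and the pure case applies. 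Relatedly, the lifted fibration cannot have target $F_{dn}(\widetilde{M})$ as you state: $p^{-1}(x_i)$ carries no natural ordering, so the correct receptacle is the partially symmetrised quotient $F_{dn}(\widetilde{M})/(S_d\times\cdots\times S_d)$, whose forgetful fibration has fibre $D_d$ of the punctured cover; this is repairable but is part of setting the induction up correctly.

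The second gap is that the five-lemma step fails, as stated, precisely in the case the paper most needs, namely $M=\rp$, $\widetilde{M}=\St$. Since $\pi_2(\rp)\neq 0$, the Fadell--Neuwirth long exact sequence of $F_2(\rp)\to F_1(\rp)$ does not truncate to a short exact sequence of fundamental groups, so at the stage $n=2$ there is no ladder of short exact sequences to which the five-lemma applies. The paper gets around this by proving in addition that $\psi_1$ induces isomorphisms on all higher homotopy groups (in particular $\pi_2(\rp)\to\pi_2(D_2(\St))$) and then invoking the strong four-lemma on the five-term ladder beginning at $\pi_2$; your argument needs this supplementary input or some substitute. By contrast, your base case $n=1$ is correct and in fact slightly more direct than the paper's diagram chase, except that the final deduction should read $[\gamma]=p_{\#}[\widetilde{\gamma}\,]=1$ rather than appeal to injectivity of $p_{\#}$.
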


We have the following immediate consequence of \reth{covering}:
\begin{cor}\label{cor:embed}
Let $n\in \N$. The $n\up{th}$ braid group of a non-orientable surface embeds in the $2n\up{th}$ braid group of its orientable double cover. In particular, $B_{n}(\rp)$ embeds in $B_{2n}(\St)$.
\end{cor}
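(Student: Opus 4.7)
The plan is to apply \reth{covering} directly with $d=2$. Beyond the theorem itself, the only input required is the classical fact that every compact, connected, non-orientable surface $M$ admits a (connected) orientable double covering space $\widetilde{M}$, namely the orientation double cover, whose points are pairs $(x,o_x)$ with $x\in M$ and $o_x$ a local orientation of $M$ at $x$. Non-orientability of $M$ guarantees that $\widetilde{M}$ is connected, so the covering has degree exactly $2$. When $M$ has a finite set of points removed from its interior, one performs the construction on the closed ambient surface and then deletes the preimages of the punctures, obtaining a connected orientable double cover of the punctured surface to which \reth{covering} applies.

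With $d=2$ and $\widetilde{M}$ as above, \reth{covering} immediately yields an embedding $B_{n}(M)\hookrightarrow B_{2n}(\widetilde{M})$, which is the first assertion of the corollary. For the specific case, the antipodal involution on $\St$ realises the sphere as the orientable double cover of $\rp$, so taking $M=\rp$ and $\widetilde{M}=\St$ gives $B_{n}(\rp)\hookrightarrow B_{2n}(\St)$ at once. There is no substantive obstacle here: the statement really is an immediate corollary, and the only mild subtlety is the recollection that the orientation cover is connected precisely when the base is non-orientable, so that the degree of the covering is indeed $2$ and \reth{covering} can be invoked with this value of $d$.
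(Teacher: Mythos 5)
Your proposal is correct and matches the paper exactly: the paper presents this as an immediate consequence of \reth{covering} applied with $d=2$ to the orientation double cover, which is precisely what you do. The extra remarks about connectedness of the orientation cover and the handling of punctures are sound and only make explicit what the paper leaves implicit.
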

Note however that if $n\geq 2$, such an embedding does not restrict to an embedding of the corresponding pure braid subgroups since $P_{n}(\rp)$ has torsion $4$ (see \repr{agt}(\ref{it:jgt})) and $P_{2n}(\St)$ has torsion $2$.

Together with the $2$-sphere $\St$, the braid groups of the real projective plane $\rp$ are of particular interest, notably because they have non-trivial centre~\cite{VB,GG2}, and torsion elements~\cite{VB,M}. Indeed, Van Buskirk showed that $\St$ and $\rp$ are the only surfaces whose braid groups have torsion~\cite{VB}. Let us recall briefly some properties of $B_n(\rp)$.

If $\dt\subseteq \rp$ is a topological disc, there is a group homomorphism $\map
{\iota}{B_n}[B_n(\rp)]$ induced by the inclusion. If $\beta\in B_n$ then we shall denote its image
$\iota(\beta)$ simply by $\beta$. In \repr{present}, we recall a presentation of $B_{n}(\rp)$ due to Van Buskirk~\cite{VB}. In~\cite{GG4}, a presentation of $P_{n}(\rp)$ was given, and the splitting problem for the Fadell-Neuwirth short exact sequence of its pure braid groups was solved. The first two braid groups of $\rp$ are finite: $B_1(\rp)=P_1(\rp)\cong \Z_{2}$, $P_{2}(\rp)$ is isomorphic to the quaternion group $\quat$ of order~$8$, and $B_{2}(\rp)$ is isomorphic to the generalised quaternion group of order $16$. For $n\geq 3$, $B_{n}(\rp)$ is infinite. The pure braid group $P_{3}(\rp)$ is isomorphic to a semi-direct product of a free group of rank $2$ by $\quat$~\cite{VB}; an explicit action was given in~\cite{GG2}.

The so-called `full twist' braid of $B_n(\rp)$ is defined by $\ft= (\sigma_1\cdots\sigma_{n-1})^n$, where $\sigma_1,\ldots,\sigma_{n-1}$ are the standard generators of $B_{n}$, and is the square of the `half twist' braid $\garside$ defined by
\begin{equation}\label{eq:garside}
\garside= (\sigma_{1}\cdots \sigma_{n-1})(\sigma_{1}\cdots \sigma_{n-2}) \cdots (\sigma_{1} \sigma_{2})\sigma_{1}.
\end{equation}
For $n\geq 2$, Murasugi showed that $\ft$ generates the centre of $B_n(\rp)$, and he characterised the finite order elements of $B_{n}(\rp)$~\cite{M}, although their orders are not clear, even for elements of $P_{n}(\rp)$. In~\cite{GG2,GG9}, we proved the following results that include the classification of the finite subgroups of $P_{n}(\rp)$:
\begin{prop}[\cite{GG2,GG9}]\label{prop:agt}
Let $n\geq 2$. Then:
\begin{enumerate}[(a)]
\item\label{it:agt1} $B_{n}(\rp)$ has an element of order $\ell$ if and only if $\ell$ divides either $4n$ or
$4(n-1)$.
\item the (non-trivial) torsion of $P_{n}(\rp)$ is precisely $2$ and $4$.
\item\label{it:agt2} the full twist $\ft$ is the unique element of $B_{n}(\rp)$ of order $2$.
\item\label{it:jgt} up to isomorphism, the maximal finite subgroups of $P_{n}(\rp)$ are $\quat$ if $n=2,3$, and $\Z_{4}$ if $n\geq 4$.
\item  if $n\geq 3$, then up to isomorphism, the infinite virtually cyclic subgroups of $P_{n}(\rp)$ are $\Z$,
$\Z_{2}\times \Z$ and $\Z_{4} \ast_{\Z_{2}} \Z_{4}$.
\end{enumerate}
\end{prop}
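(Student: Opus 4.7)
The plan is to combine three tools: Murasugi's realisation of torsion in $B_n(\rp)$ as periodic self-homeomorphisms of a punctured $\rp$~\cite{M}, the Fadell--Neuwirth short exact sequence for the pure braid groups, and the embedding $B_n(\rp)\hookrightarrow B_{2n}(\St)$ just established in \reco{embed}.

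For part~(a), I would use Nielsen realisation to represent any finite-order element of $B_n(\rp)$ by an orientation-preserving periodic self-homeomorphism $h$ of $\rp$ permuting $n$ distinguished points, and then lift $h$ along the orientation double cover $\St\to\rp$ to a periodic self-homeomorphism $\widetilde h$ of $\St$ permuting the $2n$ preimages. Applying the Riemann--Hurwitz formula to the orbifold $\St/\langle\widetilde h\rangle$ while keeping track of $\Z_2$-equivariance under the deck transformation would pin the order of $h$ down to a divisor of $4n$ or of $4(n-1)$, depending on whether the marked-point orbit of $\widetilde h$ meets its fixed locus. Conversely, I would exhibit explicit braids built from the Artin generators $\sigma_i$ together with Van Buskirk's extra generator $\rho_n$ (see \repr{present}) realising every such divisor.

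Parts~(b),~(c) and~(d) I would deduce by combining~(a) with the Fadell--Neuwirth fibration $P_n(\rp)\twoheadrightarrow P_{n-1}(\rp)$, whose fibre group is free (hence torsion-free) for $n\geq 3$, and with \reco{embed}. For~(c), note that $\ft\in B_n(\rp)$ maps to a central element of order~$2$ in $B_{2n}(\St)$, which by the Gillette--Van Buskirk uniqueness of the order-$2$ element in $B_{m}(\St)$ must be the full twist $\ft[2n]$; any other order-$2$ element of $B_n(\rp)$ would likewise embed to $\ft[2n]$, and injectivity forces it to coincide with $\ft$. For~(b), I would intersect the allowed-order list from~(a) with $P_n(\rp)$ and rule out orders $\geq 8$ by an induction on $n$ using the fibration, based at $P_2(\rp)\cong\quat$. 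For~(d), the same induction shows that for $n\geq 4$ every $2$-subgroup of $P_n(\rp)$ projects injectively to a $2$-subgroup of $P_{n-1}(\rp)$ with torsion-free kernel, and hence is cyclic of order dividing~$4$; the small cases $n=2,3$ house the exceptional $\quat$.

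For~(e), I would use the structural fact that every infinite virtually cyclic group is of the form $F\rtimes\Z$ or of the form $F_1\ast_F F_2$ with $F_1,F_2,F$ finite and $[F_i:F]=2$. Substituting the classification of finite subgroups of $P_n(\rp)$ from~(d) and using~(c) to identify the unique order-$2$ subgroup as $\langle\ft\rangle$ leaves precisely the three types $\Z$, $\Z_2\times\Z$ and $\Z_4\ast_{\Z_2}\Z_4$. The main obstacle will be the Riemann--Hurwitz bookkeeping in~(a): one must carefully separate the two divisibility branches $4n$ and $4(n-1)$ by a $\Z_2$-equivariant fixed-point analysis of $\widetilde h$, and verify that each branch is actually realised by a bona-fide braid in $B_n(\rp)$ rather than only by an abstract periodic self-homeomorphism; once this is done, parts~(b)--(e) follow cleanly from the embedding and the Fadell--Neuwirth induction.
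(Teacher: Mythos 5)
The paper offers no proof of this proposition to compare against: it is imported wholesale from the authors' earlier work \cite{GG2,GG9} (hence the citation in the statement), and the present paper only uses it as input. Judged on its own, your sketch assembles the right ingredients for (a)--(c): the realisation of periodic mapping classes by periodic homeomorphisms, the lift along $\St\to\rp$ with an orbit count on the $2n$ marked points (the two fixed points of the rotation are swapped by the antipodal map, so either $0$ or $2$ of them are marked, giving the two branches $4n$ and $4(n-1)$), the explicit braids $a=\sii{n-1}\cdots\sii{1}\rh{1}$ and $b=\sii{n-2}\cdots\sii{1}\rh{1}$ of orders $4n$ and $4(n-1)$, and the Fadell--Neuwirth induction with free, hence torsion-free, fibre groups. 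Your argument for (c) via \reco{embed} and the uniqueness of the order-$2$ element of $B_{2n}(\St)$ is clean and non-circular, since the proof of \reth{covering} nowhere uses \repr{agt}; it is a genuinely pleasant alternative to the original argument. (One slip: ``orientation-preserving homeomorphism of $\rp$'' is meaningless, $\rp$ being non-orientable; what you want is the orientation-preserving choice of lift to $\St$.)

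The genuine gaps are in (d) and (e). Your induction shows only that every finite subgroup of $P_n(\rp)$ embeds, via the strand-forgetting projections, into $P_2(\rp)\cong\quat$; the step ``projects injectively to a $2$-subgroup of $P_{n-1}(\rp)$, hence is cyclic of order dividing $4$'' is a non sequitur, since $P_3(\rp)\cong\FF_2\rtimes\quat$ itself contains $\quat$. The whole content of (d) is that $\quat$ does \emph{not} embed in $P_n(\rp)$ for $n\geq 4$, which is a non-splitting-type theorem proved in \cite{GG9} and is not a consequence of the torsion-free-kernel argument. Similarly in (e), the structure theorem for infinite virtually cyclic groups together with (c) and (d) does not leave ``precisely three types'': it also leaves $\Z_4\rtimes\Z$ (in particular $\Z_4\times\Z$), which is perfectly compatible with your finite-subgroup data and must be excluded by a separate centraliser/commutation argument; for $n=3$ it additionally leaves the candidates built from $\quat$ (such as $\quat\rtimes\Z$ and $\quat\ast_{\Z_4}\quat$), which must also be ruled out; and the realisation of $\Z_4\ast_{\Z_2}\Z_4$ as an actual subgroup (two order-$4$ elements with common square $\ft$ and \emph{no further relations}) is not automatic. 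These exclusions and realisations are the substance of \cite{GG9} and cannot be dispatched by the bookkeeping you describe.
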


For $m\geq 2$, let \dic{4m} denote the dicyclic group of order $4m$. It has the following presentation:
\begin{equation*}
\dic{4m}=\setangr{x,y}{x^m=y^2,\; yxy^{-1}=x^{-1}}.
\end{equation*}
The classification of the finite subgroups of $B_{n}(\St)$ and $B_{n}(\rp)$ is an interesting
problem, and helps us to better understand their group structure. In the case of $\St$, this was
undertaken in~\cite{GG5,GG6}:

\begin{thm}[\cite{GG6}]\label{th:finitebn}
Let $n\geq 3$. The maximal finite subgroups of $B_n(\St)$ are:
\begin{enumerate}
\item\label{it:fina} $\Z_{2(n-1)}$ if $n\geq 5$.
\item\label{it:finb} $\dic{4n}$.
\item\label{it:finc} $\dic{4(n-2)}$ if $n=5$ or $n\geq 7$.
\item\label{it:find} the binary tetrahedral group, denoted by $\tonestar$, if $n\equiv 4 \pmod 6$.
\item\label{it:fine} the binary octahedral group, denoted by $\oonestar$, if $n\equiv 0,2\pmod 6$.
\item\label{it:finf} the binary icosahedral group, denoted by $\istar$, if $n\equiv 0,2,12,20\pmod{30}$.
\end{enumerate}
\end{thm}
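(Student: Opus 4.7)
The plan is to reduce the classification to that of finite subgroups of $SO(3)$ via the mapping class group of the punctured sphere. For $n\geq 3$ the centre of $B_n(\St)$ is $\ang{\ft}\cong\Z_2$, and there is a central extension
\begin{equation*}
1\longrightarrow \ang{\ft} \longrightarrow B_n(\St) \longrightarrow \text{MCG}(\St,n) \longrightarrow 1,
\end{equation*}
where $\text{MCG}(\St,n)$ denotes the mapping class group of the $n$-punctured sphere. The sphere-analogue of \repr{agt}(\ref{it:agt2}) (also due to Murasugi) says that $\ft$ is the unique element of order two in $B_n(\St)$; consequently, the preimage $\widetilde{H}$ in $B_n(\St)$ of any finite subgroup $H\leq\text{MCG}(\St,n)$ must have $\ft$ as its unique involution, which forces $\widetilde{H}$ to be the \emph{binary} version of $H$. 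In particular, a cyclic $\Z_m$ lifts to $\Z_{2m}$, a dihedral $\dih{m}$ lifts to the dicyclic group $\dic{4m}$, and the polyhedral groups $A_4$, $S_4$, $A_5$ lift to $\tonestar$, $\oonestar$ and $\istar$ respectively.

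By Kerckhoff's solution of the Nielsen realisation problem, each finite subgroup of $\text{MCG}(\St,n)$ is realised by a finite group of orientation-preserving diffeomorphisms of $\St$ permuting the marked set; after choosing a compatible round metric we may take this group to lie in $SO(3)$. The problem thus reduces to identifying, for each finite $G\leq SO(3)$, the values of $n$ for which $G$ preserves some $n$-point subset of $\St$, equivalently, the sums of $G$-orbit sizes. The orbit sizes are $\brak{1,1,m,m,\dots}$ for $\Z_m$ (two poles plus generic orbits of size $m$), $\brak{2,m,m,2m,\dots}$ for $\dih{m}$, $\brak{4,4,6,12,\dots}$ for $A_4$, $\brak{6,8,12,24,\dots}$ for $S_4$ and $\brak{12,20,30,60,\dots}$ for $A_5$, where in each case the repeated final entry is the generic orbit and the earlier entries are special orbits, each of which can be chosen at most once. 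Hence $\Z_m$ preserves an $n$-point set iff $m$ divides one of $n,n-1,n-2$; elementary arithmetic then yields the congruences $n\equiv 0,2,4\pmod 6$ for $A_4$, $n\equiv 0,2\pmod 6$ for $S_4$ and $n\pmod{30}\in\brak{0,2,12,20}$ for $A_5$. The three maximal cyclic rotations, of orders $n,n-1,n-2$, lift to $\Z_{2n}, \Z_{2(n-1)}, \Z_{2(n-2)}$ in $B_n(\St)$. A dihedral $\dih{m}$ additionally requires the two poles to be either both marked or both unmarked, so the `no-pole' and `two-pole' cases give the dicyclic groups $\dic{4n}$ and $\dic{4(n-2)}$, while the `one-pole' cyclic group $\Z_{2(n-1)}$ does \emph{not} extend to a dicyclic group.

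This produces the candidate list $\Z_{2(n-1)}$, $\dic{4n}$, $\dic{4(n-2)}$, $\tonestar$, $\oonestar$, $\istar$, and explicit realisations follow from the standard braid-theoretic lifts of the corresponding rotations: for example $\ssn{1}{n-1}$ has order $2n$ in $B_n(\St)$ and generates the cyclic subgroup inside $\dic{4n}$, while the half-twist $\garside$ provides the lift of a dihedral flip. Maximality within the list is then established by the classical containments among finite subgroups of $SO(3)$: since $A_4<S_4$, the group $\tonestar$ is maximal only when $\oonestar$ is absent, which pins down the condition $n\equiv 4\pmod 6$; the low exceptional values $n=3,4,6$ are excluded from items~(a) and~(c) via the inclusions $\Z_4<\dic{12}$ (for $n=3$), $\Z_6,\dic{8}<\tonestar$ (for $n=4$) and $\dic{16}<\oonestar$ (for $n=6$). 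The main obstacle is precisely this combinatorial bookkeeping: for each residue class of $n$ modulo~$30$ one must simultaneously exhibit the embeddings of the listed groups and verify that no larger finite subgroup of $SO(3)$ can act on an $n$-point subset of $\St$.
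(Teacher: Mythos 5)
This theorem is not proved in the present paper --- it is imported from \cite{GG6} --- so the relevant comparison is with the proof given there, whose realisation step is reproduced for $\rp$ in Section~3.2 of this paper. Your realisation argument is essentially that proof: realise the rotation group $G$ as a subgroup of $SO(3)$ preserving an $n$-point set via the orbit combinatorics of the finite subgroups of $SO(3)$, inject it into $\operatorname{\mathcal{MCG}}(\St,n)$, and pull it back through the central extension $1\to\ang{\ft}\to B_n(\St)\to\operatorname{\mathcal{MCG}}(\St,n)\to 1$, where the uniqueness and centrality of the involution $\ft$ forces the preimage to be the binary group; your orbit-size computations, the congruences modulo $6$ and $30$, and the exclusion of the exceptional values $n=3,4,6$ from items~(a) and~(c) all check out. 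Where you genuinely diverge is in the completeness direction: you push an arbitrary finite subgroup of $B_n(\St)$ down to $\operatorname{\mathcal{MCG}}(\St,n)$ and invoke Nielsen realisation for the punctured sphere together with the geometrisation of finite actions on $\St$ to land in $SO(3)$, whereas \cite{GG6} argues algebraically, using the period-$4$ cohomology of finite subgroups (coming from the free action on a homotopy $\St[3]$), the unique central involution, and the fact that torsion elements have order dividing $2n$, $2(n-1)$ or $2(n-2)$, to reduce to the admissible families of finite periodic groups. Your route is shorter but imports heavier machinery; the algebraic route avoids Nielsen realisation entirely. Two small points to tighten: first, the claim that a central extension of $H$ by $\Z_2$ with a unique involution must be the binary group is correct but deserves a word, since it is exactly here that $\Z_2\times H$, $\operatorname{GL}(2,3)$, semidihedral groups and the like are excluded; secondly, your dihedral indexing (a dihedral group of order $2m$ lifting to $\dic{4m}$) clashes with the convention of \reth{mcg}, where $\dih{4n}$ denotes the dihedral group of order $4n$.
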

More information concerning the binary polyhedral groups $\tonestar, \oonestar$ and $\istar$ may be found in~\cite{AM}. The result of \reth{finitebn} leads us to ask which finite groups are realised as subgroups of $B_n(\rp)$. As in the case of $B_{n}(\St)$, one common property of such subgroups is that they are finite periodic groups of cohomological period $2$ or $4$. Indeed, by \cite[Proposition~6]{GG2}, for all $n\geq 2$, the universal covering $X$ of $F_n(\rp)$ is a finite-dimensional complex which has the homotopy type of $\St[3]$. Thus any finite subgroup of $B_n(\rp)$ acts freely on $X$, and so has period $2$ or $4$ by \cite[Proposition~10.2, Section~10, Chapter~VII]{Br}. Since $\ft$ is the unique element of order~$2$ of $B_n(\rp)$, and it generates the centre of $B_n(\rp)$, the Milnor property must be satisfied for any finite subgroup of $B_n(\rp)$. As an application of \reco{embed}, we classifiy the finite subgroups of $B_{n}(\rp)$.

\begin{thm}\label{th:finitebnrp2}
Let $n\geq 2$. The maximal finite subgroups of $B_n(\rp)$ are isomorphic to the following groups:
\begin{enumerate}
\item\label{it:finrpa} $\dic{8n}$.
\item\label{it:finrpb} $\dic{8(n-1)}$ if $n\geq 3$.
\item\label{it:finrpc} $\oonestar$ if $n\equiv 0,1\pmod 3$.
\item\label{it:finrpd} $\istar$ if $n\equiv 0,1,6,10\pmod{15}$.
\end{enumerate}
\end{thm}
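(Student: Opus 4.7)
The plan is to combine the embedding $B_n(\rp)\hookrightarrow B_{2n}(\St)$ of \reco{embed} with the classification of finite subgroups of $B_{2n}(\St)$ given by \reth{finitebn} (applied with $n$ replaced by $2n$), and then to prune the resulting list using the order restriction of \repr{agt}(\ref{it:agt1}).

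For the upper bound, let $G$ be a finite subgroup of $B_n(\rp)$. The embedding places $G$ inside $B_{2n}(\St)$, hence inside one of the maximal finite subgroups listed in \reth{finitebn}, i.e.\ in one of $\Z_{4n-2}$, $\dic{8n}$, $\dic{8(n-1)}$, $\tonestar$, $\oonestar$ or $\istar$, subject to the corresponding congruences on $2n$. Every element of $G$ has order dividing $4n$ or $4(n-1)$. If $G\subseteq\Z_{4n-2}$, the order of every element of $G$ divides $4n-2$ and also divides $4n$ or $4(n-1)$; since $\gcd(4n-2,4n)=\gcd(4n-2,4(n-1))=2$, this forces $|G|\leq 2$, so the cyclic case contributes no new maximal isomorphism type. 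When $\tonestar$ is maximal in $B_{2n}(\St)$ one has $2n\equiv 4\pmod 6$, equivalently $n\equiv 2\pmod 3$; in that range $3 \nmid 4n$ and $3 \nmid 4(n-1)$, so $G$ has no element of order $3$ and cannot be $\tonestar$; for $n\equiv 0,1\pmod 3$ one already has $\tonestar\subset\oonestar$, so $\tonestar$ never appears as a maximal type. Rewriting the remaining congruences of \reth{finitebn} on $2n$ in terms of $n$ gives $n\equiv 0,1\pmod 3$ for $\oonestar$ and $n\equiv 0,1,6,10\pmod{15}$ for $\istar$, and in these ranges the orders of all elements of the group divide $4n$ or $4(n-1)$.

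For the realization, we exhibit explicit finite subgroups of $B_n(\rp)$ of each listed isomorphism type. For $\dic{8n}$ and $\dic{8(n-1)}$, pair the full twist $\ft$ (the unique element of order $2$, which is central) with an element $x$ of order $4n$, respectively $4(n-1)$, whose existence is guaranteed by \repr{agt}(\ref{it:agt1}), and an element $y$ of order $4$ satisfying $y^2=\ft=x^{2n}$ (respectively $x^{2(n-1)}$) and $yxy^{-1}=x^{-1}$; these elements can be written explicitly as words in the $\sigma_i$ and the $\pi_1(\rp)$-type generators $\rho_j$. For $\oonestar$ and $\istar$, use that each sits in $SU(2)$ with centre $\{\pm I\}$ and projects to a finite rotation subgroup of $SO(3)$; a configuration of $2n$ points on $\St$ that is invariant under such a rotation group and also under the antipodal map descends to a configuration of $n$ points on $\rp$, and the resulting geometric action lifts to the desired finite subgroup of $B_n(\rp)$, whose image in $B_{2n}(\St)$ is the known copy of $\oonestar$ or $\istar$.

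The main obstacle is the maximality verification, especially for $\dic{8(n-1)}$ in the smallest cases: for example $\dic{16}$ is contained in $\oonestar$ inside $B_6(\St)$, so for $n=3$ one must show that the copy of $\dic{16}$ realized in $B_3(\rp)$ is \emph{not} contained in any copy of $\oonestar$ actually lying in $B_3(\rp)$, even though it is contained in such a copy inside $B_6(\St)$. This requires a careful analysis of the conjugacy classes of finite subgroups of $B_{2n}(\St)$ together with the determination of which of them meet the image of the embedding $B_n(\rp)\hookrightarrow B_{2n}(\St)$.
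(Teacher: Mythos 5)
Your upper-bound argument is essentially the paper's (\repr{neccond}): embed $B_n(\rp)$ into $B_{2n}(\St)$ via \reco{embed}, apply \reth{finitebn} to $B_{2n}(\St)$, and prune the list using the torsion restriction of \repr{agt}(\ref{it:agt1}). One small repair: for $n\equiv 2\pmod 3$ it does not suffice to rule out $G\cong\tonestar$ itself; you must note that any subgroup of $\tonestar\cong\quat\rtimes\Z_{3}$ without $3$-torsion lies in the $\quat$-factor, hence in $\dic{8n}$, so that this case contributes nothing beyond the dicyclic entries.

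The realisation half is where your proposal stops short of a proof. For the dicyclic groups you posit an element $y$ of order $4$ with $y^{2}=\ft$ and $yxy^{-1}=x^{-1}$ and assert that it ``can be written explicitly''; the existence of such a $y$ is precisely the content of the realisation and is not guaranteed by \repr{agt}. The paper establishes it in two ways: uniformly and geometrically in \resec{proofb}, where a dihedral or polyhedral rotation group $G$ preserving an antipodally symmetric $2n$-point set on $\St$ descends to a subgroup of $\operatorname{\mathcal{H}}(\rp,X)$, injects into $\operatorname{\mathcal{MCG}}(\rp,n)$ by the rigidity of finite-order homeomorphisms of $\St$ fixing at least three points~\cite{E,vK}, and is then lifted through the Scott sequence \reqref{sesrp2} to the corresponding binary or dicyclic group following the argument of~\cite{GG6}; and explicitly algebraically in \resec{realdicyc} via \relem{conjri}. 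Your geometric sketch for $\oonestar$ and $\istar$ is the right idea but omits exactly these two verifications (injectivity into the mapping class group, and identification of the central extension produced by \reqref{sesrp2} --- here the uniqueness of the order-$2$ element $\ft$ is what forces the lift to be the binary group rather than some other extension of $G$ by $\Z_{2}$). Finally, your closing concern about strict maximality (e.g.\ $\dic{16}$ embeds abstractly in $\oonestar$ when $n=3$) is legitimate, but the paper does not carry out the conjugacy-class analysis you call for either: its proof consists exactly of the two halves above, so ``maximal'' is to be read in the sense that every finite subgroup embeds in one of the listed groups and each listed group is realised.
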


The proof of \reth{finitebnrp2} is given in \resec{classfinite}, and is obtained by combining \reco{embed} with \reth{finitebn}. In this way, we obtain a list of possible finite subgroups of $B_n(\rp)$ in \resec{proofa}. Some of these possibilities are not realised (notably $\tonestar$ is not realised if $n\equiv 2 \pmod 3$, despite apparently being compatible with the embedding). The final step is to prove that the subgroups given in the statement of \reth{finitebnrp2} are indeed realised for the given values of $n$. This is achieved in \resec{proofb} using the geometric constructions of~\cite{GG6} of the finite subgroups of $B_{n}(\St)$, as well as \reco{embed} and the following short exact sequence due to Scott~\cite{Sc}, which exists if $n\geq 2$: 
\begin{equation}\label{eq:sesrp2}
1 \to \ang{\ft} \to B_{n}(\rp)\to \operatorname{\mathcal{MCG}}(\rp,n) \to 1,
\end{equation}
where $\operatorname{\mathcal{MCG}}(\rp,n)$ denotes the \emph{mapping class group} of the projective plane relative to the $n$-point subset $X$ that consists of the basepoints of the strands of $B_{n}(\rp)$, that is, the set of isotopy classes of homeomorphisms of $\rp$ that leave $X$ invariant (we allow the points of $X$ to be permuted). In \resec{realdicyc}, we give explicit algebraic realisations of the dicyclic subgroups of $B_n(\rp)$ in terms of Van Buskirk's presentation of $B_{n}(\rp)$~\cite{VB}.

The finite subgroups of $\operatorname{\mathcal{MCG}}(\rp,n)$ were classified by Bujalance, Cirre and Gamboa~\cite{BCG}. In \resec{mcg}, we use \reth{finitebnrp2} and \req{sesrp2} to obtain an alternative proof of their result.

\enlargethispage{4mm}

\begin{thm}[\cite{BCG}]\label{th:mcg}
Let $n\geq 2$. The maximal finite subgroups of $\operatorname{\mathcal{MCG}}(\rp,n)$ are isomorphic to the following groups:
\begin{enumerate}
\item the dihedral group $\dih{4n}$ of order $4n$.
\item the dihedral group $\dih{4(n-1)}$ if $n\geq 3$.
\item $\sn[4]$ if $n\equiv 0,1\pmod 3$.
\item $\an[5]$ if $n\equiv 0,1,6,10\pmod{15}$.
\end{enumerate}
\end{thm}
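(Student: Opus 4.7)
The plan is to deduce \reth{mcg} directly from \reth{finitebnrp2} using the short exact sequence~\reqref{sesrp2}. Let $p\colon B_{n}(\rp)\to \operatorname{\mathcal{MCG}}(\rp,n)$ denote the surjection in~\reqref{sesrp2}, so $\ker{p}=\ang{\ft}\cong \Z_{2}$. For any finite subgroup $H\leq \operatorname{\mathcal{MCG}}(\rp,n)$, the preimage $p^{-1}(H)$ is finite of order $2\lvert H \rvert$ and contains $\ft$; conversely, $p$ sends any finite subgroup of $B_{n}(\rp)$ to a finite subgroup of $\operatorname{\mathcal{MCG}}(\rp,n)$. This gives an inclusion-preserving bijection between the finite subgroups of $\operatorname{\mathcal{MCG}}(\rp,n)$ and the finite subgroups of $B_{n}(\rp)$ that contain $\ft$, with inverse $\widetilde{H}\mapsto \widetilde{H}/\ang{\ft}$.

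Next, I would observe that every non-trivial finite subgroup of $B_{n}(\rp)$ contains $\ft$. By \reth{finitebnrp2}, any such subgroup is contained in one of the maximal finite subgroups $\dic{8n}$, $\dic{8(n-1)}$, $\oonestar$ or $\istar$; each of these groups has a unique element of order~$2$ (the central involution $y^{2}=x^{m}$ in $\dic{4m}$, and the unique centre element in each binary polyhedral group). Since $\ft$ is the unique element of order~$2$ of $B_{n}(\rp)$ by \repr{agt}(\ref{it:agt2}), this central involution must coincide with $\ft$. The correspondence therefore restricts to a bijection between the maximal finite subgroups of $B_{n}(\rp)$ and those of $\operatorname{\mathcal{MCG}}(\rp,n)$, with the values of $n$ for which each isomorphism class is realised being exactly those listed in \reth{finitebnrp2}.

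It then remains to identify each quotient $\widetilde{H}/\ang{\ft}$. Killing $y^{2}=x^{m}$ in the standard presentation of $\dic{4m}$ yields
\begin{equation*}
\dic{4m}/\ang{y^{2}}\cong \setangr{x,y}{x^{2m}=y^{2}=1,\; yxy^{-1}=x^{-1}}=\dih{2m},
\end{equation*}
which applied with $m=2n$ and $m=2(n-1)$ gives $\dic{8n}/\ang{\ft}\cong \dih{4n}$ and $\dic{8(n-1)}/\ang{\ft}\cong \dih{4(n-1)}$. For the binary polyhedral groups, it is classical that $\oonestar/Z(\oonestar)\cong \sn[4]$ and $\istar/Z(\istar)\cong \an[5]$. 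Matching these with the arithmetic conditions of \reth{finitebnrp2} yields precisely the four families in the statement, completing the proof.

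The argument is therefore essentially a bookkeeping exercise once one has \reth{finitebnrp2}: no step presents a genuine obstacle, the only thing to be careful about being the verification that $\ft$ lies inside each maximal finite subgroup of $B_{n}(\rp)$, which is immediate from the uniqueness of the involution supplied by \repr{agt}(\ref{it:agt2}).
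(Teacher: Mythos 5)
Your argument is correct in substance and follows essentially the same route as the paper: the paper's proof simply invokes the bijection between maximal finite subgroups of $B_{n}(\rp)$ and those of $\operatorname{\mathcal{MCG}}(\rp,n)$ induced by~\reqref{sesrp2} (citing the analogous remark for $\St$) and then quotes \reth{finitebnrp2} together with \req{quotient}, which records exactly the quotient identifications $\dic{8(n-i)}/\ang{\ft}\cong\dih{4(n-i)}$, $\oonestar/Z\cong\sn[4]$ and $\istar/Z\cong\an[5]$ that you verify by hand. One caveat: your intermediate assertion that \emph{every} non-trivial finite subgroup of $B_{n}(\rp)$ contains $\ft$ is false as stated --- a cyclic subgroup of odd order (e.g.\ $\Z_{3}\leq\oonestar$ when $n\equiv 0,1\pmod 3$) contains no involution --- but this is harmless, since all you need is that the \emph{maximal} finite subgroups contain $\ft$, which follows either from your observation about their unique central involutions or, more directly, from the fact that $\ft$ is central of finite order, so adjoining it to any finite subgroup again yields a finite subgroup.
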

 
Another application of \reth{covering} concerns the linearity of certain surface braid groups. Recall that a group is said to be \emph{linear} if it admits a faithful representation in a multiplicative group of matrices over some field. Krammer~\cite{Kr1,Kr2} and Bigelow~\cite{Big} showed that $B_{n}$ is linear. The linearity of $\operatorname{\mathcal{MCG}}(\St,n)$ was proved in~\cite{Bar,BB,Kor}, and that of $B_{n}(\St)$ was obtained in~\cite{Bar,BB}. If $n\leq 2$ then $B_{n}(\rp)$ is linear because it is finite, while $B_{3}(\rp)$ is known to be isomorphic to a subgroup of $\operatorname{\mathrm{GL}}(96,\Z)$~\cite{Bar}. To the best of our knowledge, nothing is known about the linearity of $n\up{th}$ braid groups, $n\geq 2$, for other surfaces (in the case $n=1$, surface groups are known to be linear).  In \resec{linear} we give a short alternative proof in \repr{linear} of the linearity of $B_{n}(\St)$, and with the aid of \reco{embed}, we deduce the following results.
\begin{thm}\label{th:linear}
Let $n\in \N$.
\begin{enumerate}[(a)]
\item Let $M$ be a compact, connected surface, possibly with boundary, of genus zero if $M$ is orientable, and of genus one if $M$ is non-orientable. Then $B_{n}(M)$ is linear. 
\item Let $\mathbb{T}^2$ denote the $2$-torus, and let $x\in \mathbb{T}^2$. Then $B_{n+1}(\mathbb{T}^2)$ is linear if and only if $B_{n}(\mathbb{T}^2\setminus\brak{x})$ is linear. Consequently, $B_{2}(\mathbb{T}^2)$ is linear.
\end{enumerate}
\end{thm}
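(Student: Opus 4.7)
For part (a) the plan is to split into cases by orientability and to reduce, via subgroup embeddings, to the known linearity of the Artin braid group $B_{n}$ (Bigelow-Krammer) and of $B_{n}(\St)$ (\repr{linear}). If $M$ is orientable of genus zero then either $M=\St$, handled directly by \repr{linear}, or $M$ is planar; in the latter case $M$ embeds in the disc $\dt$, so by Paris-Rolfsen~\cite{PR} we obtain an embedding $B_{n}(M) \lhra B_{n}(\dt) = B_{n}$, and $B_{n}(M)$ is linear as a subgroup of a linear group. If $M$ is non-orientable of genus one, its orientation double cover $\widetilde{M}$ is orientable of genus zero: the boundary circles of $M$ (if any) are two-sided in $\rp$, since they bound the removed discs, and so each lifts to two boundary circles of $\widetilde{M}$; a brief Euler-characteristic calculation then forces $\widetilde{M}$ to have genus zero, with $\widetilde{M}=\St$ when $M=\rp$. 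Now \reco{embed} yields $B_{n}(M) \lhra B_{2n}(\widetilde{M})$, which reduces this case to the orientable one.

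For part (b), the key observation is that, because $\mathbb{T}^{2}$ is a topological group, translation trivialises the Fadell-Neuwirth fibration $F_{n+1}(\mathbb{T}^{2}) \to \mathbb{T}^{2}$ that forgets the last coordinate: the map
\[
(p_{1},\ldots,p_{n+1}) \longmapsto \bigl((p_{1}-p_{n+1},\ldots,p_{n}-p_{n+1}),\, p_{n+1}\bigr)
\]
is a homeomorphism $F_{n+1}(\mathbb{T}^{2}) \cong F_{n}(\mathbb{T}^{2}\setminus\brak{x})\times \mathbb{T}^{2}$, and hence
\[
P_{n+1}(\mathbb{T}^{2}) \;\cong\; P_{n}(\mathbb{T}^{2}\setminus\brak{x}) \times \Z^{2}.
\]
We then invoke three standard permanence properties of linearity: subgroups of linear groups are linear; direct products of linear groups are linear; and finite-index overgroups of linear groups are linear, via induced representations. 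Since $P_{k}(\cdot)$ has finite index in $B_{k}(\cdot)$ and $\Z^{2}$ is linear, these three facts and the above isomorphism force the chain of equivalences
\[
B_{n+1}(\mathbb{T}^{2}) \text{ linear} \iff P_{n+1}(\mathbb{T}^{2}) \text{ linear} \iff P_{n}(\mathbb{T}^{2}\setminus\brak{x}) \text{ linear} \iff B_{n}(\mathbb{T}^{2}\setminus\brak{x}) \text{ linear}.
\]
For the consequence, taking $n=1$, we have $B_{1}(\mathbb{T}^{2}\setminus\brak{x}) = \pi_{1}(\mathbb{T}^{2}\setminus\brak{x})$, which is free of rank two, hence linear (for instance via its embedding into $\operatorname{\mathrm{SL}}(2,\Z)$); so $B_{2}(\mathbb{T}^{2})$ is linear.

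The main new ingredient in part (a) is \reco{embed}, which channels the non-orientable cases into the orientable ones. In part (b) the substantive step is the direct-product (not merely split semidirect) decomposition of $P_{n+1}(\mathbb{T}^{2})$: it is this strengthening, peculiar to the torus as a topological group, which powers the two-way implication. Extending the same strategy to higher-genus surfaces is exactly where the argument breaks down, since the corresponding Fadell-Neuwirth fibration no longer trivialises. The only small checks required are the Euler-characteristic count for the orientation double cover in part~(a) and the verification that Paris-Rolfsen's injectivity hypothesis applies to the planar embeddings used there.
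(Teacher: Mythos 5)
Part (b) of your argument is correct and is essentially the paper's proof: the paper quotes the decomposition $P_{n+1}(\mathbb{T}^2)\cong P_{n}(\mathbb{T}^2\setminus\brak{x})\times\Z^2$ from the literature, whereas you prove it inline via the translation trivialisation of the Fadell--Neuwirth fibration, and the chain of equivalences and the $n=1$ case are identical. Your treatment of the non-orientable case of part (a) via \reco{embed} is also the paper's.

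The gap is in the orientable planar case of part (a). For $M$ a disc with $r-1\geq 1$ holes, the inclusion $M\subset\dt$ does \emph{not} induce an injection $B_{n}(M)\to B_{n}(\dt)$. Paris--Rolfsen's criterion says the inclusion-induced homomorphism $B_{n}(N)\to B_{n+m}(M)$ fails to be injective precisely when (among other exceptional cases) some component of $\overline{M\setminus N}$ is a disc containing none of the $m$ added marked points -- and here with $m=0$ \emph{every} component of $\dt\setminus M$ is such a disc. Concretely, the braid in which one strand traverses a loop encircling a single hole and nothing else is non-trivial in $B_{n}(M)$ but becomes null-homotopic, hence trivial, once the hole is filled in; already for $n=1$ this is the non-injective map $\Z\cong\pi_{1}(M)\to\pi_{1}(\dt)=1$ when $M$ is an annulus. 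So the ``small check'' you defer at the end actually fails. The repair is the one the paper makes: put one extra strand in each hole, i.e.\ embed $B_{n}(M)$ into $B_{n+r-1}(\dt)=B_{n+r-1}$ (this is the content of~\cite{GG8}, or Paris--Rolfsen applied with $m=r-1$ marked points, one in each complementary disc); the linearity of $B_{n+r-1}$ then gives the result, and the rest of your argument goes through unchanged.
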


In particular, the braid groups of $\rp$ and the M\"obius band are linear.





\subsection*{Acknowledgements}

This work took place during the visit of the second author to the
Departmento de Mate\-m\'atica do IME~--~Universidade de S\~ao Paulo during
the periods~14\up{th}~--~29\up{th}~April~2008, 18\up{th}~July~--~8\up{th} August~2008, 31\up{st}~October~--~10\up{th}~November~2008 and 20\up{th}~May~--~3\up{rd}~June 2009, and of the visit of the first author to the Laboratoire de Math\'ematiques Nicolas Oresme, Universit\'e de Caen during the period 21\up{st}~November~--~21\up{st}~December~2008. This work was supported by the international Cooperation USP/Cofecub project n\up{o} 105/06, by the CNRS/CNPq project n\up{o}~21119 and by the~ANR project TheoGar n\up{o} ANR-08-BLAN-0269-02. The writing of part of this paper took place while  the second author was at the Instituto de Matem\'aticas, UNAM Oaxaca, Mexico. He would like to thank the CNRS for having granted him a `délégation' during this period, CONACYT for partial financial support through its programme `Estancias postdoctorales y sab\'aticas vinculadas al fortalecimiento de la calidad del posgrado nacional', and the Instituto de Matem\'aticas for its hospitality and excellent working atmosphere.

\section{Embeddings of the braid groups of coverings}\label{sec:inject}

Let $N$ be a compact, connected surface, let $X$ be an $r$-point subset in the interior of $N$, where $r\geq 0$, and let $M=N\setminus X$. Let $d,n\in \N$, and let $\map{p}{\widetilde{M}}[M]$ be a $d$-fold covering map of $M$. The aim of this section is to prove \reth{covering}. We use the covering map $p$ to construct a map $\map{\psi_{n}}{D_{n}(M)}[D_{dn}(\widetilde{M})]$ of configuration spaces, and then go on to show that $\psi_{n}$ induces an injective homomorphism $\map{\psi_{n\#}}{B_{n}(M)}[B_{dn}(\widetilde{M})]$. This homomorphism may be used to compare the braid groups of $M$ with those of $\widetilde{M}$, and its existence seems like an interesting fact in its own right.

Let $z_1,\ldots,z_n$ be $n$ base points lying in a small disc in the interior of $M$. Given a subset $A$ of $n$ distinct unordered points in $M$ (an element of $D_n(M)$ in other words), $p^{-1}(A)$ is a subset of $\widetilde{M}$ consisting of $dn$ distinct unordered points, and so is an element of $D_{dn}(\widetilde{M})$. The correspondence $A \longmapsto p^{-1}(A)$ thus defines a continuous map 
$\map{\psi_{n}}{D_n(M)}[D_{dn}(\widetilde{M})]$
of permuted configuration spaces whose base points are the sets $\brak{z_{1},\ldots,z_{n}}$ and $\brak{p^{-1}(z_1),\ldots,p^{-1}(z_n)}$ respectively. So the map $\psi_{n}$ induces a homomorphism $\map{\psi_{n\#}}{B_n(M)}[B_{dn}(\widetilde{M})]$ on the level of the fundamental groups (with the given base points).

We now prove that $\psi_{n\#}$ is injective by induction on $n$. We first deal with the case $n=1$.
 
\begin{lem}\label{lem:n1inj}
The homomorphism $\map{\psi_{1\#}}{B_1(M)}[B_{d}(\widetilde{M})]$ is injective.  Furthermore, for all $q\geq 2$, the induced homomorphisms $\pi_q(M) \to \pi_{q}\bigl(D_d(\widetilde{M})\bigr)$ of the higher homotopy groups of $M$ and $D_d(\widetilde{M})$ are isomorphisms.
\end{lem}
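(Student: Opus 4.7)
My plan is to exploit three auxiliary maps to analyse $\psi_1$: the regular $S_d$-covering $F_d(\widetilde{M})\to D_d(\widetilde{M})$, the $i$-th coordinate projection $\mathrm{pr}_i\colon F_d(\widetilde{M})\to\widetilde{M}$ (for any fixed $i\in\{1,\ldots,d\}$), and the covering $p\colon\widetilde{M}\to M$ itself. The key observation, essentially tautological from the definition of $\psi_1$, is that for any based map $f\colon(\St[q],\ast)\to(M,z_1)$ the composite $\psi_1\circ f$ lifts through $F_d(\widetilde{M})\to D_d(\widetilde{M})$ starting at $(\tilde{z}_1^{(1)},\ldots,\tilde{z}_1^{(d)})$, and its $i$-th coordinate is precisely the lift $\tilde{f}_i$ of $f$ to $\widetilde{M}$ starting at $\tilde{z}_1^{(i)}$; in particular $p\circ\tilde{f}_i=f$.

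For the injectivity of $\psi_{1\#}$, if $[\gamma]\in\pi_1(M)$ lies in its kernel then the lift of $\psi_1\circ\gamma$ is a nullhomotopic loop in $F_d(\widetilde{M})$ (by regularity of the covering $F_d(\widetilde{M})\to D_d(\widetilde{M})$). Applying the continuous projection $\mathrm{pr}_i$, each $\tilde{\gamma}_i$ becomes a nullhomotopic loop in $\widetilde{M}$, and hence $\gamma=p\circ\tilde{\gamma}_i$ is nullhomotopic in $M$.

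For $q\geq 2$, the same lifting-and-projection argument (now with uniqueness of the lift, since $\St[q]$ is simply connected) shows that the composite
\[
\pi_q(M)\xrightarrow{\psi_{1*}}\pi_q(D_d(\widetilde{M}))\xleftarrow{\cong}\pi_q(F_d(\widetilde{M}))\xrightarrow{\mathrm{pr}_{i*}}\pi_q(\widetilde{M})\xrightarrow{p_*}\pi_q(M)
\]
equals the identity. Since the two covering-induced arrows are isomorphisms in degrees $\geq 2$, the isomorphism claim for $\psi_{1*}$ reduces to showing that $\mathrm{pr}_{i*}\colon\pi_q(F_d(\widetilde{M}))\to\pi_q(\widetilde{M})$ is itself an isomorphism for $q\geq 2$. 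For this I would invoke the Fadell-Neuwirth fibration structure of $\mathrm{pr}_i$: its fiber $F_{d-1}(\widetilde{M}\setminus\{y\})$ is a configuration space of an open surface and therefore aspherical, so the long exact sequence gives the required isomorphism at once for $q\geq 3$; for $q=2$ it reduces the claim to the vanishing of the connecting homomorphism $\pi_2(\widetilde{M})\to\pi_1(F_{d-1}(\widetilde{M}\setminus\{y\}))$, which is automatic whenever $\widetilde{M}$ itself is aspherical.

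The main obstacle is the residual non-aspherical possibility, namely $\widetilde{M}\cong\St$ (the case $\widetilde{M}\cong\rp$ forces $d=1$ and is trivial). Since $\St$ admits only $\St$ and $\rp$ as free finite quotients, we are reduced to checking $M\in\{\St,\rp\}$ with $d\leq 2$, which I would verify directly using the homotopy equivalence $F_2(\St)\simeq\St$ arising from the first-coordinate Fadell-Neuwirth fibration whose fiber $\St\setminus\{y\}\simeq\Et$ is contractible.
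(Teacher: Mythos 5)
Your proof is correct and rests on the same two pillars as the paper's: the observation that the coordinates of the canonical lift of $\psi_{1}\circ f$ through the covering $F_{d}(\widetilde{M})\to D_{d}(\widetilde{M})$ are exactly the lifts of $f$ through $p$ (so that projecting to a coordinate and applying $p$ recovers $f$), and the Fadell--Neuwirth fibration $F_{d}(\widetilde{M})\to\widetilde{M}$, whose fibre is aspherical in general and contractible in the one non-aspherical case $\widetilde{M}=\St$, $M=\rp$, $d=2$. The differences are purely organisational: the paper packages the injectivity of $\psi_{1\#}$ as a commutative ladder of short exact sequences resolved by the Short $5$-Lemma, and treats the higher homotopy groups in the $\rp$ case by exhibiting $\widetilde{\psi_{1}}$ as a $\Z_{2}$-equivariant homotopy inverse of the first-coordinate projection, whereas you run the equivalent ``composite equals the identity'' retraction argument directly.
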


\begin{proof} 
Let $\widetilde{z_{1}}\in p^{-1}(z_{1})$, let $\Gamma$ be the group of covering transformations of the covering map $\map{p}{\widetilde{M}}[M]$, and let $\gamma_{1}=\id,\gamma_{2},\ldots,\gamma_{d}$ denote the elements of $\Gamma$. We thus have a map $\map{\widetilde{\psi_{1}}}{\widetilde{M}}[F_{d}(\widetilde{M})]$ given by $\widetilde{x}\longmapsto (\gamma_{1}(\widetilde{x}),\gamma_{2}(\widetilde{x}),\ldots,\gamma_{d}(\widetilde{x}))$ that makes the following diagram commute:
\begin{equation*}
\xymatrix{%
\widetilde{M} \ar[d]_{\widetilde {\psi_1}} \ar[r]^{p} &  
\ar[d]^-{\psi_1}  M  \\
F_{d}(\widetilde{M})\ar[r]^{\alpha} & D_d(\widetilde{M}),}
\end{equation*}
where $\map{\alpha}{F_{d}(\widetilde{M})}[D_d(\widetilde{M})]$ is the usual quotient map.
From this diagram, we obtain the following commutative diagram of short exact sequences:
\begin{equation*}
\xymatrix{%
1 \ar[r] & \pi_1(\widetilde{M},\widetilde{z_{1}}) \ar[r]^{p_{\#}} \ar[d]_{\widetilde{\psi_{1}}_{\#}} & \pi_1(M,z_{1}) \ar[r]^-{q_{1}} \ar[d]_{{\psi_{1\#}}} & \Gamma \ar[r] \ar[d]^{\rho} & 1\\
1 \ar[r]& P_{d}( \widetilde{M}) \ar[r]^-{\alpha_{\#}}  & B_{d}( \widetilde{M}) \ar[r]^-{q_{2}} & \sn[d] \ar[r] & 1,}
\end{equation*}
where $q_{1},q_{2}$ are the canonical quotient homomorphisms, and the homomorphism $\rho$ is that induced by the rest of the diagram. The homomorphism $\widetilde{\psi_{1}}_{\#}$ is injective since its composition with the homomorphism 
$\map{p_{1\#}}{P_{d}( \widetilde{M})}[\pi_1(\widetilde{M})]$ induced by the projection 
$\map{p_1}{F_{d}(\widetilde{M})}[\widetilde{M}]$ onto the first coordinate is the identity.

We claim that $\rho$ is injective. To see this, let $\gamma\in \ker{\rho}$, and let $c$ be a loop in $M$ based at $z_{1}$ such that $q_{1}(\ang{c})=\gamma$. Let $\widetilde{c}$ be the lift of $c$ based at $\widetilde{z_{1}}$. We have that $q_{2}\circ \psi_{1\#}(\ang{c})=\rho\circ q_{1}(\ang{c})=\id$, so $\psi_{1\#}(\ang{c})\in \ker{q_{2}}=P_{d}( \widetilde{M})$. Further, $\psi_{1}(c)=p^{-1}(c)=\brak{\gamma_{1}(\widetilde{c}),\gamma_{2}(\widetilde{c}),\ldots,\gamma_{d}(\widetilde{c})}$. For each $i=1,\ldots,d$, $\gamma_{i}(\widetilde{c})$ is an arc between $\gamma_{i}(\widetilde{z_{1}})$ and $\gamma_{j}(\widetilde{z_{1}})$ for some $j\in \brak{1,\ldots,d}$. But $\psi_{1\#}(\ang{c})\in P_{d}( \widetilde{M})$, so $j=i$, and thus each $\gamma_{i}(\widetilde{c})$ is a loop based at $\gamma_{i}(\widetilde{z_{1}})$. In particular $\ang{\widetilde{c}}\in \pi_1(\widetilde{M},\widetilde{z_{1}})$, and $\ang{c}=p_{\#}(\ang{\widetilde{c}})$, so $\gamma=q_{1}(\ang{c})=q_{1}\circ p_{\#}(\ang{\widetilde{c}})=\id$. Hence $\rho$ is injective, as claimed. The first part of the lemma then follows from the Short $5$-Lemma~\cite[Lemma 3.1, page 14]{Mac}.

For the second part, if $M$ is different from $\rp$ and $\St$ then it and its configuration spaces are finite-dimensional $CW$-complexes that are Eilenberg-MacLane spaces of type $K(\pi,1)$, and so the result is clearly true. If $M=\St$ then since $p$ is a covering map, $d=1$, $\widetilde{M}=\St$, and again the result follows directly. 

So suppose that $M=\rp$. By the Riemann-Hurwitz formula, we either have $d=1$ and $\widetilde{M}=\rp$, in which case the result is again clear, or else $d=2$ and $\widetilde{M}=\St$. From the first part, the homomorphism $\map{\psi_{1\#}}{\pi_{1}(\rp)}[B_{2}(\St)]$ is injective, and since $\pi_{1}(\rp)\cong B_{2}(\St)\cong \Z_{2}$, it is an isomorphism. The projection $\map{p_{1}}{F_2(\St)}[\St]$ onto the first coordinate is a fibration whose fibre over a point $x_{0}\in \St$ is $\St \setminus \brak{x_0}$ which is contractible, and hence it induces isomorphisms of the homotopy groups of $F_2(\St)$ and $\St$. Moreover, it is a homotopy equivalence: the map $\map{\widetilde{\psi_{1}}}{\St}[F_2(\St)]$ is a homotopy inverse of $p_{1}$. It remains to show that $D_2(\St)$ has the homotopy type of $\rp$. Observe that $\widetilde{\psi_{1}}$ is a $\Z_2$-equivariant map with respect to the free action on $\St$ given by the antipodal map and the free action on $F_2(\St) \to F_2(\St)$ given by the map that exchanges coordinates. The induced map on the quotient is $\map{\psi_{1}}{\rp}[D_2(\St)]$, and since $\widetilde{\psi_{1}}$ induces isomorphisms of the homotopy groups of $\St$ and $F_2(\St)$ and the stated actions are free, $\psi_{1}$ induces isomorphisms of the homotopy groups of $\rp$ and $D_2(\St)$. This completes the proof of the lemma.
\end{proof}

Let
\begin{equation*}
D_{d,\ldots,d}(\widetilde{M})=F_{dn}(\widetilde{M})/(\sn[d]\times\cdots \times\sn[d]),
\end{equation*}
where for $i=1,\ldots,n$, the $i\up{th}$ copy of $S_{d}$ is the symmetric group on the letters $i,i+n,\ldots,i+(d-1)n$, and let
\begin{equation*}
B_{d,\ldots,d}(\widetilde{M})=\pi_{1}\bigl(D_{d,\ldots,d}(\widetilde{M})\bigr).
\end{equation*}
Define the map $\map{\widetilde{\psi_n}}{F_n(M)}[D_{d,\ldots,d}(\widetilde{M})]$ by 
\begin{equation}\label{eq:defpsi}
\widetilde{\psi_n} \bigl((x_1,\ldots,x_n) \bigr)= \bigl(p^{-1}(\brak{x_1}),\ldots,p^{-1}(\brak{x_n}) \bigr).
\end{equation}
In a similar manner to $\psi_{n}$, $\widetilde{\psi_{n}}$ induces a homomorphism $\map{\widetilde{\psi_{n}}_{\#}}{P_n(M)}[B_{d,\ldots,d}(\widetilde{M})]$ which is the restriction of $\psi_{n\#}$ to $P_n(M)$.

\begin{prop}\label{prop:restpn}
Let $n\geq 1$. The homomorphism $\map{\widetilde{\psi_n}_{\#}}{P_n(M)}[B_{d,\ldots,d}(\widetilde{M})]$
is injective.
\end{prop}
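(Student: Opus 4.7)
The plan is to induct on $n$. The base case $n=1$ is exactly \relem{n1inj}, which establishes not only injectivity of $\psi_{1\#}$ on $\pi_{1}$ but also isomorphisms on $\pi_{q}$ for $q\geq 2$. I would carry both properties through as the inductive hypothesis, since the higher-homotopy isomorphism is precisely what allows the induction to propagate via a five-lemma chase.

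For the inductive step, I would compare the Fadell--Neuwirth fibration
\begin{equation*}
M\setminus\brak{x_{1},\ldots,x_{n-1}}\lhra F_{n}(M)\to F_{n-1}(M),
\end{equation*}
obtained by forgetting the last coordinate, with the analogous forgetful map on the covering side
\begin{equation*}
D_{d}\bigl(\widetilde{M}\setminus p^{-1}(\brak{x_{1},\ldots,x_{n-1}})\bigr) \lhra D_{d,\ldots,d}(\widetilde{M}) \to D_{d,\ldots,d}(\widetilde{M}),
\end{equation*}
where there are $n$ blocks on the total-space side and $n-1$ blocks on the base, and the map forgets the last block of $d$ points. This right-hand map is a fibration because it arises by passing to the quotient, along a free, fiber-preserving action of $\sn[d]\times\cdots\times\sn[d]$, from the forgetful map $F_{dn}(\widetilde{M})\to F_{d(n-1)}(\widetilde{M})$, itself a composition of Fadell--Neuwirth fibrations. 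The definition \reqref{defpsi} of $\widetilde{\psi_{n}}$ shows that it covers $\widetilde{\psi_{n-1}}$ and that its restriction to the fiber is precisely the map $\psi_{1}$ associated to the $d$-fold covering $\widetilde{M}\setminus p^{-1}(\brak{x_{1},\ldots,x_{n-1}})\to M\setminus\brak{x_{1},\ldots,x_{n-1}}$. Applying \relem{n1inj} to this smaller covering then gives that the fiber map is injective on $\pi_{1}$ and an isomorphism on $\pi_{q}$ for $q\geq 2$.

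From here I would write down the long exact homotopy ladder of the two fibrations. At each degree $q\geq 2$, the inductive hypothesis on the base combined with the statement on the fiber supplies enough isomorphisms around the term $\pi_{q}(F_{n}(M))$ for the five lemma to yield an isomorphism $\pi_{q}(F_{n}(M))\to\pi_{q}\bigl(D_{d,\ldots,d}(\widetilde{M})\bigr)$; at degree one, the same chase together with the Short Five Lemma, as used in the proof of \relem{n1inj}, yields the desired injectivity of $\widetilde{\psi_{n}}_{\#}$ on $P_{n}(M)$, closing the induction. The main delicate point is justifying that the right-hand forgetful map is a genuine Serre fibration with the identified fiber, and carefully bookkeeping the higher homotopy through the ladder; this cannot be sidestepped by invoking asphericity of configuration spaces, because in the exceptional cases $M=\St$ and $M=\rp$ the spaces $F_{n}(M)$ have nontrivial $\pi_{3}$, as recalled in the introduction via the universal cover of $F_{n}(\rp)$ being homotopy equivalent to $\St[3]$.
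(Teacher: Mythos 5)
Your proof is correct, and in its essentials it is the paper's own argument: the same comparison of the Fadell--Neuwirth forgetful fibrations with their permuted analogues on the covering side, the same identification of the fibre map with $\psi_{1}$ for the punctured covering so that \relem{n1inj} applies, and a diagram chase in the resulting homotopy ladder. The difference is purely organisational, but it is a real one. The paper carries only $\pi_{1}$-injectivity as its inductive hypothesis: in the generic step it truncates the ladder to genuine short exact sequences (using that $\pi_{2}$ of the relevant configuration spaces vanishes once $n\geq 2$ and $M\neq \St$) and applies the Short Five Lemma, and it must then handle $n=2$, $M=\rp$ separately, since $\pi_{2}(\rp)\cong\Z$ is nonzero and only a four-term exact sequence is available; there it invokes the strong four lemma together with the $\pi_{2}$-isomorphism from the second part of \relem{n1inj}. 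By strengthening the inductive hypothesis to include isomorphisms on $\pi_{q}$ for all $q\geq 2$, you run that same four-lemma argument uniformly at every stage and the exceptional case disappears --- a clean trade of a little extra bookkeeping in higher degrees for the elimination of the case analysis. One small correction of terminology: at degree one your rows are in general only four-term exact sequences $\pi_{2}(B)\to\pi_{1}(F)\to\pi_{1}(E)\to\pi_{1}(B)$ (and precisely for $M=\rp$, $n=2$ the connecting map $\pi_{2}(\rp)\to\pi_{1}(\rp\setminus\brak{x_{1}})$ is nonzero), so the relevant tool is the strong four lemma --- surjectivity on $\pi_{2}$ of the base plus injectivity on $\pi_{1}$ of fibre and base --- rather than the Short Five Lemma; your inductive hypothesis supplies all three inputs, so nothing is lost. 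Your justification that the forgetful map between permuted configuration spaces is a fibration with the stated fibre is the standard local-triviality argument, which the paper also takes for granted.
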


\begin{proof}
If $d=1$ there is nothing to prove. So suppose that $d\geq 2$, in which case $M\neq \St$. The proof is by induction on $n$. For $n=1$, $\widetilde{\psi_1}=\psi_1$, and the result follows from \relem{n1inj}. The case $n=2$ is special and will be treated separately at the end of the proof. Suppose then that the result is true for some integer $n\geq 2$. Consider the commutative diagram of fibrations:
\begin{equation}\label{eq:commfib}
\begin{xy}*!C\xybox{%
\xymatrix{%
F_1(M \setminus\brak{x_1,\ldots,x_n}) \ar[d]_{\widetilde {\psi_{n+1}}\left\lvert_{M\setminus \brak{x_1,\ldots,x_n}}\right.} \ar[r] & F_{n+1}(M) \ar[d]_{\widetilde {\psi_{n+1}}} \ar[r]^{p} & F_{n}(M) \ar[d]_{\widetilde {\psi_{n}}} \\
D_d \bigl(\widetilde{M}\setminus p^{-1}(\brak{x_1,\ldots ,x_n}) \bigr) \ar[r] & D_{d,\ldots,d,d}(\widetilde{M}) \ar[r]^{\widetilde{p}} & D_{d,\ldots,d}(\widetilde{M}),}}
\end{xy} 
\end{equation}
where $\map{p}{F_{n+1}(M)}[F_{n}(M)]$ is the map that forgets the last point, 
\begin{equation*}
\map{\widetilde{p}}{D_{d,\ldots,d,d}(M)}[D_{d,\ldots,d}(M)]
\end{equation*}
is the map that forgets the last $d$ points. The long exact sequences in homotopy of these fibrations yield the following commutative diagram:  
\begin{equation*}
\xymatrix{ 
1 \ar[r] & \pi_1\left(M\setminus \brak{x_1,\ldots,x_n}\right) \ar[r] \ar[d]_{\widetilde {\psi_{n+1}}_{\#}\left\lvert_{\pi_1\left(M\setminus \brak{x_1,\ldots,x_n}\right)}\right.} & P_{n+1}(M)
\ar[r]^{p_{\#}} \ar[d]_{\widetilde {\psi_{n+1}}_{\#}} & P_n(M)  \ar[r] \ar[d]_{\widetilde {\psi_{n}}_{\#}} & 1\\
1 \ar[r] & B_d \bigl(\widetilde{M}\setminus p^{-1}\left(\brak{x_1,\ldots,x_n}\right) \bigr) \ar[r] & B_{d,\ldots,d,d}(\widetilde{M}) \ar[r]^{\widetilde{p}_{\#}} &  B_{d,\ldots,d}(\widetilde{M}) \ar[r] & 1.}
\end{equation*}
The fact that $M\neq \St$ implies the exactness of the first row. Noting that the $d$-fold covering $\map{p}{\widetilde{M}}[M]$ induces a $d$-fold covering map
\begin{equation*}
\map{\widehat{p}}{\widetilde{M}\setminus \brak{p^{-1}\left(\brak{x_1, \ldots, x_n}\right)}}[M\setminus \brak{x_1,\ldots,x_n}],
\end{equation*}
and that $\widetilde {\psi_{n+1}}\left\lvert_{\pi_1\left(M\setminus \brak{x_1,\ldots,x_n}\right)}\right.$ is equal to the map
\begin{equation*}
\map{\psi_{1}}{F_1(M \setminus\brak{x_1,\ldots,x_n})}[D_d \bigl(\widetilde{M}\setminus p^{-1}(\brak{x_1,\ldots ,x_n}) \bigr)]
\end{equation*}
for the space $F_1(M \setminus\brak{x_1,\ldots,x_n})$, it follows by applying \relem{n1inj} that the homomorphism $\widetilde {\psi_{n+1}}_{\#}\left\lvert_{\pi_1\left(M\setminus \brak{x_1,\ldots,x_n}\right)}\right.$ is injective. The injectivity of $\widetilde {\psi_{n+1}}_{\#}$ is then a consequence of the $5$-Lemma and the induction hypothesis.

It remains to prove the result in the case $n=2$. Since $d\geq 2$, observe that $M$ is not simply connected. So if $M\neq \rp$, $\pi_2(M)=\pi_2(F_{1}(M))=\brak{1}$, and the proof for the case $n=2$ follows from the case $n=1$ using exactly the same induction argument as in the previous paragraph. So suppose that $M=\rp$. In this case, we have the same commutative diagram~\reqref{commfib}, but the long exact sequence in homotopy of the fibrations yields the following commutative diagram:
\begin{equation*}
\xymatrix{ 
\pi_2(\rp)\cong \Z  \ar[r] \ar[d] & \pi_1(\rp \setminus\{x_1\})\cong \Z \ar[r] \ar[d]_{\widetilde {\psi_{1}}_{\#}} & P_{2}(\rp)\cong \mathcal{Q}_8
\ar[r] \ar[d]_{\widetilde {\psi_{2}}_{\#}} & P_1(\rp)\cong \Z_2  \ar[r] \ar[d]_{\widetilde {\psi_{1}}_{\#}} & 1\\
\pi_2(D_2(\St))\cong \Z \ar[r] & B_2( \St \setminus \{p^{-1}(x_1)\}) \ar[r] & B_{2,2}( \St) \ar[r] &  B_{2}(\St)\cong \Z_2 \ar[r] & 1.}
\end{equation*}
The first vertical arrow is the homomorphism induced on the $\pi_{2}$-level by the map $\widetilde{\psi_{1}}$.
The first (resp.\ second) vertical homomorphism is an isomorphism (resp.\ is injective) by \relem{n1inj}. The fourth vertical homomorphism is an isomorphism using the injectivity of $\widetilde {\psi_{1}}_{\#}= {\psi_{1}}_{\#}$ given by \relem{n1inj} and the fact that $P_1(\rp)\cong B_{2}(\St)\cong \Z_2$.
It follows from the strong $4$-Lemma~\cite[Lemma 3.2, page 14]{Mac} that $\widetilde {\psi_{2}}_{\#}$ is injective, and the result follows. 
\end{proof}

Now we come to the main result of this section which immediately implies \reth{covering}.
\begin{thm}
The homomorphism $\map{\psi_{n\#}}{B_n(M)}[B_{dn}(\widetilde{M})]$ is injective.
\end{thm}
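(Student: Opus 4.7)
The plan is to reduce the general statement to \repr{restpn} via the underlying permutations of braids. First, I observe that the natural map
\begin{equation*}
D_{d,\ldots,d}(\widetilde{M})=F_{dn}(\widetilde{M})/(\sn[d]\times\cdots\times\sn[d]) \to F_{dn}(\widetilde{M})/\sn[dn]=D_{dn}(\widetilde{M})
\end{equation*}
is a finite covering of degree $(dn)!/(d!)^{n}$, since both spaces are quotients of $F_{dn}(\widetilde{M})$ by free actions of finite groups. The induced homomorphism $\map{j_{\#}}{B_{d,\ldots,d}(\widetilde{M})}[B_{dn}(\widetilde{M})]$ is therefore injective, and by the construction of $\widetilde{\psi_{n}}$ given in \req{defpsi}, the restriction of $\psi_{n\#}$ to $P_{n}(M)$ coincides with $j_{\#}\circ \widetilde{\psi_{n}}_{\#}$.

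Next, suppose that $\beta\in B_{n}(M)$ satisfies $\psi_{n\#}(\beta)=1$, and let $\pi\in \sn$ be the permutation underlying $\beta$. From the defining formula \req{defpsi}, each strand of $\beta$ joining $z_{i}$ to $z_{\pi(i)}$ lifts under $p$ to $d$ strands of $\psi_{n\#}(\beta)$, each joining a point of the fibre $p^{-1}(z_{i})$ to a point of $p^{-1}(z_{\pi(i)})$. Consequently, the permutation in $\sn[dn]$ underlying $\psi_{n\#}(\beta)$ sends the block $p^{-1}(z_{i})$ to the block $p^{-1}(z_{\pi(i)})$ for every $i\in \brak{1,\ldots,n}$. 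Since $\psi_{n\#}(\beta)=1$, this permutation must be trivial, and in particular the induced permutation on the $n$ blocks is trivial, which forces $\pi=\id$. Hence $\beta\in P_{n}(M)$.

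Finally, we exploit that the restriction of $\psi_{n\#}$ to $P_{n}(M)$ factors as $j_{\#}\circ \widetilde{\psi_{n}}_{\#}$: the equality $j_{\#}(\widetilde{\psi_{n}}_{\#}(\beta))=\psi_{n\#}(\beta)=1$, combined with the injectivity of $j_{\#}$, implies $\widetilde{\psi_{n}}_{\#}(\beta)=1$, and then \repr{restpn} yields $\beta=1$, as required. The only delicate point is the block-preservation observation linking the $\sn$-permutation of $\beta$ with the $\sn[dn]$-permutation of $\psi_{n\#}(\beta)$, but this is immediate from the definition of $\psi_{n}$ and the fact that $p$ is a covering; all the genuine work has already been carried out in the proof of \repr{restpn}.
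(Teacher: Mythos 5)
Your proof is correct and follows essentially the same strategy as the paper's: one shows that any element of the kernel must be pure by examining how the underlying permutation of $\psi_{n\#}(\beta)$ acts on the blocks $p^{-1}(z_{i})$, and then invokes \repr{restpn}. Your version is marginally cleaner in two respects — it avoids the paper's decomposition of $x$ into a pure braid times a permutation braid supported in a small disc (arguing directly that the block permutation of $\psi_{n\#}(\beta)$ recovers the permutation of $\beta$), and it makes explicit the covering $D_{d,\ldots,d}(\widetilde{M})\to D_{dn}(\widetilde{M})$ whose injectivity on $\pi_{1}$ justifies identifying $\widetilde{\psi_{n}}_{\#}$ with the restriction of $\psi_{n\#}$ to $P_{n}(M)$, a point the paper leaves implicit.
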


\begin{proof} 
Let $x\in \ker{\psi_{n\#}}$. We take the base points $z_{1},\ldots,z_{n}$ of the braid strings to be contained within a small disc $\mathcal{D}$ lying in the interior of $M$. The inclusion of $\mathcal{D}$ in $M$ induces a homomorphism $B_{n}(\mathcal{D})\to B_{n}(M)$. Let $\map{\pi}{B_{n}(M)}[\sn]$ denote the natural braid permutation homomorphism. A generating set of $B_{n}(M)$ may be obtained by adding a set of `surface generators' (whose representatives are loops based at $z_{1}$) to a set of standard generators $\sigma_{1},\ldots,\sigma_{n-1}$ of $B_{n}(\mathcal{D})$ via the short exact sequence $1\to P_n(M) \to B_n(M) \stackrel{\pi}{\to} \sn \to 1$. Let $C$ be a set of $n!$ permutation braids, in other words a set of coset representatives of $B_{n}(M)$ modulo $P_{n}(M)$. The elements of $C$ may be chosen to belong to $B_{n}(\mathcal{D})$, and we may take the representative of the identity permutation to be the identity braid. Thus there exist $y\in P_{n}(M)$ and $w\in C$ such that $x=y\ldotp w$. Now $x\in \ker{\psi_{n\#}}$, and we have that $\psi_{n\#}(y)=\psi_{n\#}(w^{-1})$. But $w\in B_{n}(\mathcal{D})$, so the permutation $\pi(\psi_{n\#}(w^{-1}))$ of $\psi_{n\#}(w^{-1})$ permutes the elements $jn+1,\ldots,jn+n$ for each $j=0,\ldots, d-1$, while the permutation $\pi(\psi_{n\#}(y))$ of $\psi_{n\#}(y)$ permutes the elements $i,i+n,\dots, i+(d-1)n$ for each $i=1,\ldots, n$ by \req{defpsi}. Since $\pi(\psi_{n\#}(w^{-1}))=\pi(\psi_{n\#}(y))$, it follows that this permutation must be the identity, hence $w=\id$, and thus $x=y\in P_{n}(M)$. By \repr{restpn}, the restriction $\widetilde{\psi_{n}}_{\#}$ of $\psi_{n\#}$ to $P_n(M)$ is injective, and we conclude that $x=\id$, hence $\psi_{n\#}$ is injective.
\end{proof}

\section{The classification of the finite subgroups of $B_{n}(\rp)$ and $\operatorname{\mathcal{MCG}}(\rp,n)$}\label{sec:classfinite}

In this section, we prove Theorems~\ref{th:finitebnrp2} and~\ref{th:mcg}. Since $B_{n}(\rp)$ and $\operatorname{\mathcal{MCG}}(\rp,n)$ are finite for $n\in \brak{1,2}$, we shall suppose in what follows that $n\geq 3$. The proof of \reth{finitebnrp2} is divided into two parts:
\begin{enumerate}[(a)]
\item\label{it:proofa} in \resec{proofa}, using \reth{covering} in the case $M=\rp$, and applying \reth{finitebn}, we obtain necessary conditions for a finite group to be realised as a subgroup of $B_{n}(\rp)$. This enables us to establish a list in \repr{neccond} of the finite groups that are candidates to be subgroups of $B_{n}(\rp)$.
\item in \resec{proofb}, we show in \repr{realise} that the candidates of \repr{neccond} are indeed realised as subgroups of $B_{n}(\rp)$ using \req{sesrp2} and geometric constructions similar to those of~\cite{GG6} for $B_{n}(\St)$.
\end{enumerate}
The proof of \reth{mcg} is given in \resec{mcg}.

\subsection{Necessary conditions for a finite group to be a subgroup of $B_{n}(\rp)$}\label{sec:proofa}

In this section, we prove the following necessary condition for a finite subgroup $H$ to be realised as a subgroup of $B_{n}(\rp)$.
\begin{prop}\label{prop:neccond}
Let $n\geq 3$, and let $H$ be a finite subgroup of $B_{n}(\rp)$. Then $H$ is isomorphic to a subgroup of one of the following groups: $\dic{8n}$, $\dic{8(n-1)}$, $\oonestar$ if $n\equiv 0,1\pmod 3$, or $\istar$ if $n\equiv 0,1,6,10\pmod{15}$.
\end{prop}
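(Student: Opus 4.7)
The strategy is to combine the embedding $B_n(\rp) \hookrightarrow B_{2n}(\St)$ of \reco{embed} with the classification of finite subgroups of $B_{2n}(\St)$ provided by \reth{finitebn}, and then to use the restriction on torsion orders in \repr{agt}(\ref{it:agt1}) to trim away any candidate host groups that do not already appear in the statement of \repr{neccond}.

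Let $H$ be a finite subgroup of $B_n(\rp)$ with $n \geq 3$, and view $H$ as a subgroup of $B_{2n}(\St)$ via \reco{embed}; since $2n \geq 6$, \reth{finitebn} applied with $n$ replaced by $2n$ shows that $H$ is contained in one of the six maximal finite subgroups of $B_{2n}(\St)$, namely $\Z_{4n-2}$, $\dic{8n}$, $\dic{8(n-1)}$ (when $n \geq 4$), $\tonestar$ (when $2n \equiv 4 \pmod{6}$, i.e.\ $n \equiv 2 \pmod{3}$), $\oonestar$ (when $2n \equiv 0, 2 \pmod{6}$, i.e.\ $n \equiv 0, 1 \pmod{3}$), or $\istar$ (when $2n \equiv 0, 2, 12, 20 \pmod{30}$, i.e.\ $n \equiv 0, 1, 6, 10 \pmod{15}$). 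The $n$-translations of these congruences agree exactly with those appearing in the statement of \repr{neccond}, so four of the six possibilities place $H$ directly inside a group on the list, leaving only $\Z_{4n-2}$ and $\tonestar$ to dispose of.

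For $\Z_{4n-2}$: any finite cyclic subgroup $H$ of $B_n(\rp)$ has order $k$ dividing $4n$ or $4(n-1)$ by \repr{agt}(\ref{it:agt1}); combined with $k \mid 4n-2$ and the fact that $\gcd(4n-2, 4n) = \gcd(4n-2, 4(n-1)) = 2$, this forces $k \leq 2$, so $H \leq \Z_2 \leq \dic{8n}$. For $\tonestar$ (so $n \equiv 2 \pmod{3}$): since $3 \nmid 4n$ and $3 \nmid 4(n-1)$, \repr{agt}(\ref{it:agt1}) shows that $B_n(\rp)$ has no element of order $3$, so by Cauchy's theorem $|H|$ is coprime to $3$; hence $H$ is contained in the unique Sylow $2$-subgroup of $\tonestar$, which is isomorphic to $\quat = \dic{8}$ and embeds into $\dic{8n}$ in standard generators as $\ang{x^n, y}$.

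The main obstacle is essentially just the arithmetic of translating the congruences on $2n$ of \reth{finitebn} into congruences on $n$, together with handling the exceptional small value $n = 3$ (where $2n = 6$ does not satisfy the hypothesis of case~(c) of \reth{finitebn}); in that situation the maximal finite subgroups of $B_6(\St)$ reduce to $\Z_{10}$, $\dic{24} = \dic{8n}$ and $\oonestar$ (the last since $3 \equiv 0 \pmod 3$), each of which is absorbed by the arguments above.
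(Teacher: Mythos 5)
Your proposal is correct and follows essentially the same route as the paper: embed $H$ into $B_{2n}(\St)$ via \reco{embed}, apply \reth{finitebn} with $2n$ in place of $n$, and eliminate the cyclic and binary tetrahedral candidates using the torsion constraint of \repr{agt}(\ref{it:agt1}). Your explicit treatment of $n=3$ (where $\dic{8(n-1)}$ is not itself maximal in $B_{6}(\St)$) and your Sylow-theoretic phrasing of the $\tonestar$ case are harmless variants of the paper's argument, and if anything slightly more careful on the first point.
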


\begin{proof}
Take $M=\rp$, $\widetilde{M}=\St$ and $d=2$ in the statement of \reth{covering}. Let $m$ denote the order of $H$. With the notation of \resec{inject}, $\psi_{n\#}(H)$ is a subgroup of $B_{2n}(\St)$ isomorphic to $H$. In particular, $\psi_{n\#}(H)$ is of order $m$. Applying \reth{finitebn}, it follows that $H$ is isomorphic to a subgroup of one of the following maximal 
finite groups of $B_{2n}(\St)$:
\begin{enumerate}
\item\label{it:S2nfinitea} $\Z_{2(2n-1)}$.
\item $\dic{8n}$.
\item $\dic{8(n-1)}$.
\item\label{it:S2nfinited} $\tonestar$ if $2n\equiv 4 \pmod 6$, \emph{i.e.} $n\equiv 2\pmod 3$.
\item $\oonestar$ if $2n\equiv 0,2 \pmod 6$, \emph{i.e.} $n\equiv 0,1\pmod 3$.
\item $\istar$ if $2n\equiv 0,2,12,20\pmod{30}$, \emph{i.e.} $n\equiv 0,1,6,10\pmod{15}$.
\end{enumerate}
The fact that $n\geq 3$ implies that $\Z_{2(2n-1)}$ and $\dic{8(n-1)}$ are maximal in $B_{2n}(\St)$.

Case~(\ref{it:S2nfinitea}) may be deleted from the list. To see this, note that if $\psi_{n\#}(H)$ is a subgroup of $\Z_{2(2n-1)}$ then $H$ is cyclic of order a divisor of $2(2n-1)$. On the other hand, its order $m$ must divide the torsion of $B_{n}(\rp)$, which is $4n$ and $4(n-1)$ by \repr{agt}(\ref{it:agt1}). But $\gcd (2n-1,2n)=\gcd (2n-1, 2(n-1))=1$, and so $m=1$ or $2$. Hence $H$ is either trivial or equal to $\ang{\ft}$ (by \repr{agt}(\ref{it:agt2})), and both are contained in $\dic{8n}$, for example.

Case~(\ref{it:S2nfinited}) may be also be deleted from the list. To see this, assume that $n\equiv 2 \pmod 3$. Suppose that $H$ is a subgroup of $\tonestar\cong \quat \rtimes \Z_{3}$ not contained in the $\quat$-factor. Then $H$ contains elements of order $3$, and hence $3$ divides the torsion of $B_{n}(\rp)$. Once more, by \repr{agt}(\ref{it:agt1}), $3$ divides $n$ or $n-1$, which contradicts the fact that $n\equiv 2 \pmod 3$.
Hence $H$ is isomorphic to a subgroup of $\quat$, which is realised as a subgroup of $\dic{8n}$, for example. This completes the proof of the proposition.
\end{proof}

\subsection{The realisation of the finite subgroups of $B_{n}(\rp)$}\label{sec:proofb}

In this section, we prove that the groups listed in \repr{neccond} are indeed realised as subgroups of $B_{n}(\rp)$.
\begin{prop}\label{prop:realise}
Let $n\geq 3$, and let $H$ be one of the following groups: $\dic{8n}$; $\dic{8(n-1)}$; $\oonestar$ if $n\equiv 0,1\pmod 3$; or $\istar$ if $n\equiv 0,1,6,10\pmod{15}$. Then $B_{n}(\rp)$ contains an isomorphic copy of $H$.
\end{prop}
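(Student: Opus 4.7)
The plan is to use Scott's short exact sequence \reqref{sesrp2} to lift suitable finite subgroups of $\operatorname{\mathcal{MCG}}(\rp,n)$ to $B_{n}(\rp)$. By \repr{agt}(\ref{it:agt2}), $\ft$ is the unique involution of $B_{n}(\rp)$ and is central, so any finite subgroup $H\subset B_{n}(\rp)$ of even order contains $\ft$ and is therefore a central $\Z_{2}$-extension of $\overline{H}=H/\ang{\ft}\subset\operatorname{\mathcal{MCG}}(\rp,n)$ enjoying the \emph{Milnor property} that every non-central involution of $\overline{H}$ lifts to an element of order~$4$ in $H$. Conversely, the preimage in $B_{n}(\rp)$ of any finite subgroup $G\subset\operatorname{\mathcal{MCG}}(\rp,n)$ is a central $\Z_{2}$-extension of $G$ with this property. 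A standard group-cohomology argument then shows that, among all such extensions, the Milnor-property extension of $\dih{2m}$ (respectively $\sn[4]$, $\an[5]$) is unique up to isomorphism and equals $\dic{4m}$ (respectively $\oonestar$, $\istar$). It therefore suffices to realise $\dih{4n}$, $\dih{4(n-1)}$, $\sn[4]$ and $\an[5]$ as subgroups of $\operatorname{\mathcal{MCG}}(\rp,n)$ under the stated congruence conditions.

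Writing $\rp=\St/\ang{-I}$ and following the philosophy of the sphere constructions of \cite{GG6}, the plan for each case is to find an antipodal-invariant configuration of $2n$ points on $\St$ preserved by a finite rotation subgroup $G\subset\mathrm{SO}(3)$ isomorphic to the desired quotient; descending to $\rp$ yields an invariant $n$-point subset of $\rp$. Concretely, for the two dicyclic cases I would take $2n$ (respectively $2(n-1)$) equally-spaced points on a great circle, in the second case together with the two poles; the induced rotation–reflection symmetry on $\rp$ is then $\dih{4n}$ (respectively $\dih{4(n-1)}$). For $\sn[4]$ and $\an[5]$ I would take unions of orbits of the rotation group of the octahedron (respectively the icosahedron) on $\St$. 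The octahedral $\sn[4]$-orbits on $\St$ have sizes $6,8,12,24$, are all antipodally paired, and descend to $\rp$-orbits of sizes $3,4,6,12$; a short combinatorial check shows that the integers expressible as $3a+4b+6c+12d$ with $a,b,c\in\brak{0,1}$ and $d\geq 0$ are precisely those congruent to $0$ or $1$ modulo~$3$. The analogous count for the icosahedral $\an[5]$-orbits on $\rp$ (of sizes $6,10,15,30$) yields exactly $n\equiv 0,1,6,10\pmod{15}$.

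The step I expect to require the most care is this arithmetic matching of orbit-sum congruences to the stated residue classes, together with the verification that the map $G\to\operatorname{\mathcal{MCG}}(\rp,n)$ is injective so that the preimage of $G$ in $B_{n}(\rp)$ has the expected order $2|G|$; the latter follows from the fact that the relevant finite rotation group already acts faithfully on $\rp$ itself, so no non-trivial element can be isotopic to the identity relative to the chosen finite point set. As a cross-check, the construction is compatible with \reco{embed}: the antipodal-symmetric $2n$-point configuration on $\St$ realises the corresponding maximal finite subgroup of $B_{2n}(\St)$ from \reth{finitebn}, and its $\ang{-I}$-invariance places that realisation inside the image of $B_{n}(\rp)$, giving an independent route to the realisation.
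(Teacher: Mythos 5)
Your overall strategy coincides with the paper's: realise the quotient $G=H/\ang{\ft}$ (a dihedral, octahedral or icosahedral rotation group) on $\rp$ by choosing an antipodally symmetric $2n$-point configuration on $\St$, embed $G$ in $\operatorname{\mathcal{MCG}}(\rp,n)$, and pull back through Scott's sequence~\reqref{sesrp2}, using the fact that $\ft$ is the unique involution of $B_{n}(\rp)$ to identify the preimage with $H$. Your explicit orbit-size arithmetic (sizes $3,4,6,12$ for the octahedral action on $\rp$ and $6,10,15,30$ for the icosahedral one) correctly reproduces the congruence conditions, and your uniqueness claim for the central $\Z_{2}$-extension with a single involution is sound and plays the r\^ole of the paper's appeal to the proof of Theorem~1.3 of~\cite{GG6}.

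There is, however, a genuine gap at the step you yourself flag: the injectivity of $G\to\operatorname{\mathcal{MCG}}(\rp,n)$. Your justification --- that $G$ acts faithfully on $\rp$, ``so no non-trivial element can be isotopic to the identity relative to the chosen finite point set'' --- is a non sequitur, and the difficulty it glosses over is real. Since $[v]=[-v]$ in $\rp$, a rotation of $\St$ through angle $\pi$ fixes \emph{pointwise} the image in $\rp$ of its entire equatorial great circle as well as its pole. Consequently, in each of your configurations there are non-trivial elements of $G$ that fix the whole marked set $X$ pointwise while acting non-trivially (hence faithfully) on $\rp$: for instance the half-turn about the polar axis in the dihedral cases, or a half-turn about a coordinate axis acting on the $3$-point image of the octahedron's vertices. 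For such an element, faithfulness of the action gives no information about its image in $\operatorname{\mathcal{MCG}}(\rp,n)$, and if any of them were isotopic to the identity rel $X$ the preimage in $B_{n}(\rp)$ would have order smaller than $2\lvert G\rvert$ and the lift would not be $H$. The paper closes this gap by lifting a putative isotopy to $\St$: the orientation-preserving lift of the element in question is then forced to fix all $2n\geq 6$ points of $\widetilde{X}$ pointwise, and a non-trivial finite-order orientation-preserving homeomorphism of $\St$ has only two fixed points by the rigidity theorems of Eilenberg and von Ker\'ekj\'art\'o~\cite{E,vK}, so the lift, and hence the original element, is trivial. Your argument needs to be replaced by this (or an equivalent) rigidity argument on the double cover; as written, the step would fail.
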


\begin{proof}
Let $X\subset \rp$ be an $n$-point subset, let $
\left\{ \begin{aligned}
p \colon\thinspace \St &\to \rp\\
x & \longmapsto [x]
\end{aligned}\right.$ denote the natural projection, and let $\widetilde{X}=p^{-1}(X)$. Let $\operatorname{\mathcal{H}}(\rp,X)$ (resp.\ $\operatorname{\mathcal{H}}(\St, \widetilde{X})$) denote the group of homeomorphisms of $\rp$ (resp.\ $\St$) that leave $X$ (resp.\ $\widetilde{X}$) invariant, and let 
$\operatorname{\mathcal{H}^+}(\St, \widetilde{X})$) denote the index $2$ subgroup of $\operatorname{\mathcal{H}}(\St, \widetilde{X})$ consisting of orientation-preserving homeomorphisms.
Given $H$ as in the statement of the proposition, let $G$ be the quotient of $H$ by its (normal) subgroup of order $2$. Then
\begin{equation}\label{eq:quotient}
G\cong 
\begin{cases}
\dih{4(n-i)} & \text{if $H=\dic{8(n-i)}$, where $i=0,1$}\\
\sn[4] & \text{if $H=\oonestar$}\\
\an[5] & \text{if $H=\istar$.}
\end{cases}
\end{equation}
Note that $G$ consists of rotations and may be realised as a subgroup of $\operatorname{\mathcal{H}^+}(\St,\widetilde{X})$ (\emph{cf.} the solution of the Nielsen realisation problem~\cite{K}). We may assume that the points of $\widetilde{X}$ are symmetrically arranged with respect to the regular polyhedron associated with $G$. 

 

The antipodal map $\map{\tau}{\St}[\St]$, defined by $\tau(\widetilde{x})=-\widetilde{x}$ for all $\widetilde{x}\in \St$, belongs to $\operatorname{\mathcal{H}}(\St, \widetilde{X}) \setminus \operatorname{\mathcal{H}^+}(\St, \widetilde{X})$, and commutes with any rotation. In particular, $\tau$ commutes with all of the elements of $G$, and so $G$ is a subgroup of 
\begin{equation*}
\Gamma=\setl{\gamma \in \operatorname{\mathcal{H}^+}(\St,\widetilde{X})}{\gamma \circ \tau=\tau \circ \gamma}.
\end{equation*}
For each $\gamma\in\Gamma$, the map $\widehat{\gamma}$ defined by $\widehat{\gamma}(y)=p(\gamma(x))$, where $y\in \rp$ and $x\in p^{-1}(\brak{y})$, is a well-defined homeomorphism of $\rp$ that leaves $X$ invariant. The correspondence $\gamma \longmapsto \widehat{\gamma}$ defines an injective group homomorphism $\map{\psi}{\Gamma}[\operatorname{\mathcal{H}}(\rp,X)]$. The injectivity of $\psi$ follows from that fact that there are exactly two elements, $\id_{\St}$ and $\tau$, of  $\operatorname{\mathcal{H}}(\St,\widetilde{X})$ that cover $\id_{\rp}$, but $\tau\notin \operatorname{\mathcal{H}^+}(\St, \widetilde{X})$. 


It follows that $\psi(G)\cong G$, and thus $\operatorname{\mathcal{H}}(\rp,X)$ contains an isomorphic copy of $G$. Let $\operatorname{\mathcal{MCG}}(\rp,n)$ denote the mapping class group of the projective plane relative to $X$. There is a natural homomorphism $\map{\varphi}{\operatorname{\mathcal{H}}(\rp,X)} [\operatorname{\mathcal{MCG}}(\rp,n)]$ given by associating the isotopy class of the homeomorphism relative to $X$. The restriction of this homomorphism to $\psi(G)$ is injective. To see this, let $f\in \psi(G)$ be such that $\varphi(f)$ is the trivial mapping class, and let $g\in G$ be such that $\psi(g)=f$. Then there exists an isotopy $\brak{f_{t}}_{t\in [0,1]}$ from $\id_{\rp}$ to $f$ relative to $X$. This isotopy lifts to an isotopy $\brak{\widetilde{f}_{t}}_{t\in [0,1]}$ from $\id_{\St}$ to some lift $\widetilde{f}\in \operatorname{\mathcal{H}^+}(\St, \widetilde{X})$ of $f$ relative to $\widetilde{X}$. By construction, $\widetilde{f}\in \Gamma$, and since $\widetilde{f}$ and $g$ are both sent to $f$ by $\psi$, the injectivity of $\psi$ implies that $\widetilde{f}=g$. Now the isotopy $\brak{\widetilde{f}_{t}}_{t\in [0,1]}$ is relative to $\widetilde{X}$, so $g$ is a finite-order homeomorphism that fixes the points of $\widetilde{X}$ (of which there are at least $6$), and thus $g=\id_{\St}$ by~\cite{E,vK}, and $f=\id_{\rp}$. This proves the injectivity of $\varphi\left\lvert_{\psi(G)}\right.$.  

It follows that $\operatorname{\mathcal{MCG}}(\rp,n)$ contains an isomorphic copy of $G$. Finally, the short exact sequence~\reqref{sesrp2} is the analogue for $\rp$ of the short exact sequence~(1-1) of \cite[page 760]{GG6}. It is derived as for $\St$ (see equation~(1-2) of~\cite[page 760]{GG6}), by taking the long exact sequence in homotopy of the fibration $\operatorname{\mathcal{H}}(\rp)\to D_{n}(\rp)$ defined by $f\longmapsto f(X)$, and whose fibre over $X$ is $\operatorname{\mathcal{H}}(\rp,X)$. As in equation~(1-2), the homomorphism $B_{n}(\rp) \to\operatorname{\mathcal{MCG}}(\rp,n)$ of~\reqref{sesrp2} is a boundary operator. Following the proof of \cite[Theorem 1.3, pages 764--765]{GG6}, we see that $G$ lifts to an isomorphic copy of $H$ lying in $B_{n}(\rp)$. This completes the proof of the proposition, and together with \repr{neccond}, that of \reth{finitebnrp2}.
\end{proof}

\subsection{The classification of the finite subgroups of $\operatorname{\mathcal{MCG}}(\rp,n)$}
\label{sec:mcg}

We close \resec{classfinite} by proving \reth{mcg}.
\begin{proof}[Proof of \reth{mcg}.]
As in the case of $\St$~\cite[Remarks~2.1]{GG6}, the short exact sequence~\reqref{sesrp2} induces a bijection between the maximal finite subgroups of $B_{n}(\rp)$ and those of $\operatorname{\mathcal{MCG}}(\rp,n)$. The result then follows from \reth{finitebnrp2} and \req{quotient}.
\end{proof}

\section{The algebraic realisation of the finite dicyclic subgroups of
$B_{n}(\rp)$}\label{sec:realdicyc}

In this section, we show in \repr{realdic} that $\dic{8n}$ and $\dic{8(n-1)}$ are realised as
subgroups of $B_{n}(\rp)$ for all $n\geq 2$ by giving explicit algebraic realisations. We first recall Van Buskirk's presentation of $B_n(\rp)$.
\begin{prop}[Van Buskirk~\cite{VB}]\label{prop:present}
The following constitutes a presentation of the group $B_n(\rp)$:\\
\underline{\textbf{generators:}}
$\si{1},\ldots,\si{n-1},\rh{1},\ldots,\rh{n}$.\\
\underline{\textbf{relations:}}
\begin{align}
\si{i}\si{j} &=\si{j}\si{i}\quad\text{if $\lvert i-j\rvert\geq 2$,}\notag\\
\si{i}\si{i+1}\si{i}&=\si{i+1}\si{i}\si{i+1} \quad\text{for $1\leq i\leq n-2$,}\notag\\
\si{i}\rh{j}&=\rh{j}\si{i}\quad\text{for $j\neq i,i+1$,}\label{eq:sirj}\\
\rh{i+1}&=\sii{i}\rh{i}\sii{i} \quad\text{for $1\leq i\leq n-1$,}\label{eq:sirisi}\\
\rh[-1]{i+1}\rh[-1]{i}\rh{i+1}\rh{i}&= \si[2]{i} \quad\text{for $1\leq i\leq n-1$,}\notag\\
\rh[2]{1}&=\si{1}\si{2}\cdots\si{n-2}\si[2]{n-1} \si{n-2}\cdots\si{2}\si{1}.\notag
\end{align}
\end{prop}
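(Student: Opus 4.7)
Since this presentation is due to Van Buskirk, the quickest route is simply to cite~\cite{VB}. For a self-contained plan, I would first fix a small disc $\dt\subseteq \rp$ containing the base points and identify the two families of generators geometrically. The $\si{i}$'s are the standard half-twists coming from the inclusion $B_{n}\to B_{n}(\rp)$ via $\dt$, while each $\rh{j}$ is the braid in which the $j\up{th}$ strand traces a non-contractible loop in $\rp$ and returns to its starting point, with all other strands held fixed.

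Next, I would verify that each listed relation holds by an explicit picture. The classical braid relations among the $\si{i}$ are immediate. The commutations $\si{i}\rh{j}=\rh{j}\si{i}$ for $j\neq i,i+1$ hold because the supports of the two braids are disjoint, and the conjugation $\rh{i+1}=\sii{i}\rh{i}\sii{i}$ expresses the fact that shifting the ``projective'' strand from position $i$ to position $i+1$ is realised by the two half-twists $\sii{i}$. The mixed commutator $\rh[-1]{i+1}\rh[-1]{i}\rh{i+1}\rh{i}=\si[2]{i}$ and the surface relation $\rh[2]{1}=\si{1}\cdots\si{n-2}\si[2]{n-1}\si{n-2}\cdots\si{1}$ are geometrically more subtle; each amounts to isotoping an explicit loop of $n$-configurations and exhibiting the result as a product of the standard generators.

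To show that these relations suffice, I would argue by induction on $n$, using the Fadell-Neuwirth fibration
\begin{equation*}
\rp\setminus \brak{x_1,\ldots,x_{n-1}} \lhra F_n(\rp) \to F_{n-1}(\rp),
\end{equation*}
whose long exact sequence in homotopy, combined with the natural short exact sequence $1\to P_n(\rp)\to B_n(\rp)\to \sn\to 1$, reduces the inductive step to building a presentation from that of a free kernel together with the quotient $B_{n-1}(\rp)$, via the standard recipe that adjoins the relations witnessing the action of the quotient on the kernel. The base case $n=1$ yields $B_{1}(\rp)\cong \pi_{1}(\rp)\cong \Z_{2}$, generated by $\rh{1}$ subject to $\rh[2]{1}=1$, in agreement with the stated presentation (the product on the right-hand side being empty when $n=1$).

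The hard part will be the bookkeeping in the inductive step: one must show that every relation produced by a Reidemeister-Schreier-type rewriting is a consequence of those listed, and in particular verify that the surface relation, which encodes the non-orientability of $\rp$ at the level of the first strand, propagates correctly through the $\si{i}$-conjugates to the remaining strands. This is the technical core of Van Buskirk's original argument, and is where most of the work will reside.
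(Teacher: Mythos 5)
The paper offers no proof of this proposition: it is a recalled result attributed to Van Buskirk~\cite{VB}, so your primary route---simply citing~\cite{VB}---is exactly what the paper does, and your geometric identification of the generators and verification of the relations is consistent with the standard treatment. One caveat on your self-contained sketch: the inductive step as you describe it (``a free kernel'' from the Fadell--Neuwirth fibration) is valid only for $n\geq 3$, since for $n=2$ the term $\pi_2(\rp)\cong\Z$ in the long exact sequence gives a non-trivial boundary map, the kernel of $P_2(\rp)\to P_1(\rp)$ is $\Z_4$ rather than a free group, and this low case (where $P_2(\rp)\cong\quat$ and $B_2(\rp)$ is generalised quaternion of order $16$) must be computed directly before the induction can start.
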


In terms of this presentation, by~\cite[equations (4) and (7), pages 770--771]{GG2}, we have the following useful identities:
\begin{align}
\rh{j}&=\ssni{j-1}{1}\rh{1}\ssni{1}{j-1} \quad\text{for $j=1,\ldots, n$}\label{eq:rjr1}\\
\rho_{n}^{-2} &=\si{n-1}\cdots \si{2}\si[2]{1} \si{2}\cdots \si{n-1}, \label{eq:rn2}
\end{align}
as well as elements of $B_n(\rp)$ of order $4n$ and of order $4(n-1)$ respectively~\cite[Proposition 26]{GG2}:
\begin{align*}
a &= \sii{n-1}\cdots\sii{1}\cdot\rh{1}\\
b &= \sii{n-2}\cdots\sii{1}\cdot\rh{1}
\end{align*}
that by \cite[Remark 27]{GG2} satisfy
\begin{equation}\label{eq:powerab}
a^n=\rho_{n}\cdots \rho_{1} \quad\text{and}\quad b^{n-1}=\rho_{n-1}\cdots \rho_{1}.
\end{equation}

\begin{rem}\label{rem:permute}
From~\cite[pages 777--778]{GG2}, we have:
\begin{enumerate}[(a)]
\item\label{it:permsi} conjugation by $a^{-1}$ permutes cyclically the following elements:
\begin{equation*}
\si{1},\ldots, \si{n-1}, a^{-1}\si{n-1}a, \sii{1},\ldots, \sii{n-1}, a^{-1}\sii{n-1}a.
\end{equation*}
\item\label{it:permrhi} conjugation by $a^{-1}$ permutes cyclically the following elements:
\begin{equation*}
\rh{1},\ldots \rh{n}, \rhi{1},\ldots, \rhi{n}.
\end{equation*}
\item conjugation by $b^{-1}$ permutes cyclically the following elements:
\begin{equation*}
\si{1},\ldots, \si{n-2}, b^{-1}\si{n-2}b, \sii{1},\ldots, \sii{n-2}, b^{-1}\sii{n-1}b.
\end{equation*}
Note that there is a typographical error in line~16 of \cite[page~778]{GG2}: it should
read `\ldots shows that $b^{-2}\si{n-2}b^2=\sii{1}$ \ldots', and not `\ldots shows that
$b^{-2}\si{n-1}b^2=\sii{1}$ \ldots'.
\end{enumerate}
\end{rem}

Let $n\geq 2$. From \repr{agt}(\ref{it:agt1}), the maximal finite cyclic subgroups of $B_{n}(\rp)$ are $\Z_{4n}$ and $\Z_{4(n-1)}$. Considered as an element of $B_{n}$, the half twist braid $\garside$ satisfies the relation:
\begin{equation}\label{eq:propgarside}
\garside^{-1} \sigma_{i} \garside=\sigma_{n-i} \quad\text{for all $i=1,\ldots,n-1$,}
\end{equation}
and since there is a group homomorphism $B_{n}\to B_{n}(\rp)$ induced by inclusion of a topological disc in $\rp$, the same relation holds in $B_{n}(\rp)$. We then have the following algebraic realisations of the dicyclic subgroups given by \reth{finitebnrp2}:
\begin{prop}\label{prop:realdic}
Let $n\geq 2$. Then:
\begin{enumerate}[(a)]
\item $\ang{a,\garside}\cong \dic{8n}$.
\item $\ang{b,\garside a^{-1}}\cong \dic{8(n-1)}$.
\end{enumerate}
\end{prop}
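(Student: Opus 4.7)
The strategy is, for each isomorphism claim, to exhibit explicit relations showing that the two proposed generators satisfy the defining relations of $\dic{8n}$ (respectively $\dic{8(n-1)}$) and then to deduce that the resulting surjection from the abstract group is injective. In both cases the injectivity follows from the same general argument: the image of $x$ has order $4n$ (resp.\ $4(n-1)$), so the kernel of the surjection meets $\ang{x}$ trivially; every non-trivial normal subgroup of $\dic{8n}$ (resp.\ $\dic{8(n-1)}$) contains the centre $\ang{x^{2n}}$ (resp.\ $\ang{x^{2(n-1)}}$), but $x^{2n}$ (resp.\ $x^{2(n-1)}$) is sent to $\ft\ne 1$, so the kernel is trivial.

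For part~(a), with $x=a$ and $y=\garside$: the relation $a^{2n}=\garside^2$ holds because both sides are of order two, and $\ft$ is the unique order-two element of $B_n(\rp)$ by \repr{agt}(\ref{it:agt2}). The main work is to verify $\garside a\garside^{-1}=a^{-1}$. Using \eqref{eq:propgarside} to conjugate the factor $\sii{n-1}\cdots\sii{1}$ of $a$, we obtain
\begin{equation*}
\garside a\garside^{-1}=(\sii{1}\sii{2}\cdots\sii{n-1})(\garside\rh{1}\garside^{-1}),
\end{equation*}
while \eqref{eq:rjr1} and a telescopic cancellation of the pairs $\sii{i}\sigma_i=1$ rewrite $a^{-1}=\rh{1}^{-1}\sigma_1\cdots\sigma_{n-1}$ as $(\sii{1}\cdots\sii{n-1})\rh{n}^{-1}$. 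Hence the identity reduces to the single claim $\garside\rh{1}\garside^{-1}=\rh{n}^{-1}$, which I expect to be the main technical obstacle of the whole proposition. A partial verification comes for free: conjugating the relation $\rh{1}^2=\sigma_1\cdots\sigma_{n-2}\sigma_{n-1}^2\sigma_{n-2}\cdots\sigma_1$ by $\garside$ and applying \eqref{eq:propgarside} yields $\sigma_{n-1}\cdots\sigma_2\sigma_1^2\sigma_2\cdots\sigma_{n-1}$, which is precisely $\rh{n}^{-2}$ by \eqref{eq:rn2}, so $(\garside\rh{1}\garside^{-1})^2=\rh{n}^{-2}$. Lifting from squares to the elements themselves demands a direct word-manipulation: using \eqref{eq:sirj} (whereby $\rh{1}$ commutes with $\sigma_2,\ldots,\sigma_{n-1}$) together with iterated applications of \eqref{eq:sirisi}, one shepherds $\rh{1}$ through the standard product expansion of $\garside$ and matches the output with $\rh{n}^{-1}\garside$.

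For part~(b), with $x=b$ and $y=\garside a^{-1}$: the relation $b^{2(n-1)}=\ft$ again follows from \repr{agt}(\ref{it:agt2}). Invoking part~(a), the identity $\garside a\garside^{-1}=a^{-1}$ gives $\garside a^{-1}\garside^{-1}=a$, so, writing $c=\garside a^{-1}$, we have
\begin{equation*}
c^2=(\garside a^{-1}\garside^{-1})\garside^2\cdot a^{-1}=a\cdot\ft\cdot a^{-1}=\ft=b^{2(n-1)}
\end{equation*}
by centrality of $\ft$. For the remaining relation $cbc^{-1}=b^{-1}$, observe that $b=\sigma_{n-1}a$, that $cac^{-1}=\garside a\garside^{-1}=a^{-1}$, and that
\begin{equation*}
c\sigma_{n-1}c^{-1}=\garside(a^{-1}\sigma_{n-1}a)\garside^{-1}=(\garside a^{-1}\garside^{-1})(\garside\sigma_{n-1}\garside^{-1})(\garside a\garside^{-1})=a\sigma_1 a^{-1}
\end{equation*}
by $\garside\sigma_{n-1}\garside^{-1}=\sigma_1$. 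Putting these together,
\begin{equation*}
cbc^{-1}=(c\sigma_{n-1}c^{-1})(cac^{-1})=a\sigma_1 a^{-2},
\end{equation*}
while $b^{-1}=a^{-1}\sigma_{n-1}^{-1}$, so the desired relation reduces to the single identity $a^2\sigma_1 a^{-2}=\sigma_{n-1}^{-1}$. This is an immediate consequence of \rerem{permute}(\ref{it:permsi}): two steps backwards along the listed cycle take $\sigma_1\longmapsto a^{-1}\sii{n-1}a\longmapsto\sii{n-1}=\sigma_{n-1}^{-1}$ under $a$-conjugation, i.e.\ $a^2\sigma_1 a^{-2}=\sigma_{n-1}^{-1}$. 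The isomorphism $\ang{b,\garside a^{-1}}\cong\dic{8(n-1)}$ then follows from the injectivity argument given in the first paragraph.
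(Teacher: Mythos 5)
Your reduction of part~(a) to the single identity $\garside\rh{1}\garside^{-1}=\rhi{n}$ is exactly right --- this is the $i=1$ case of the paper's \relem{conjri} --- but you do not prove it, and this is precisely where the real work of the proposition lies. Your ``partial verification'' only shows that $(\garside\rh{1}\garside^{-1})^2=\rho_n^{-2}$, which is far from sufficient: square roots are highly non-unique in $B_n(\rp)$ (the full twist $\ft$ alone has many), and the promised ``direct word-manipulation'' shepherding $\rh{1}$ through $\garside$ is only described, never carried out. The paper's argument is a short explicit computation: writing $\garside=(\sigma_1\cdots\sigma_{n-1})(\sigma_1\cdots\sigma_{n-2})\cdots\sigma_1$, the innermost block $\sigma_{n-1}^{-1}\cdots\sigma_1^{-1}\,\rh{1}\,\sigma_1^{-1}\cdots\sigma_{n-1}^{-1}$ equals $\rh{n}$ by \req{rjr1}, the adjacent block $\sigma_{n-1}\cdots\sigma_1\cdot\sigma_1\cdots\sigma_{n-1}$ equals $\rho_n^{-2}$ by \req{rn2}, and the remaining conjugating word involves only $\sigma_1,\ldots,\sigma_{n-2}$, which commute with $\rho_n$ by \req{sirj}; the result is $\rhi{n}$. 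Until some such computation is supplied, part~(a) is incomplete. A second, smaller, problem: your injectivity argument rests on the claim that every non-trivial normal subgroup of $\dic{8n}$ contains the centre, which is false in general (for $n=3$, $\ang{x^4}$ is a normal subgroup of order $3$ of $\dic{24}$ avoiding $x^6$). It is repairable from facts you already have: the kernel meets $\ang{x}$ trivially, and every element of $\dic{8n}$ outside $\ang{x}$ squares to the central involution $x^{2n}$, which maps to $\ft\neq 1$, so the kernel is trivial. (The paper argues instead by counting: $\garside\notin\ang{a}$ because $\garside$ is a non-pure braid of order $4$ whereas the order-$4$ elements of $\ang{a}$ are the pure braids $a^{\pm n}$, so the quotient of $\dic{8n}$ has more than $4n$ elements and must be all of $\dic{8n}$.)

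Your part~(b), by contrast, is correct and takes a genuinely different, rather cleaner route than the paper's. The observation $b=\sigma_{n-1}a$, together with $\garside\sigma_{n-1}\garside^{-1}=\sigma_1$ and the relation from part~(a), reduces $cbc^{-1}=b^{-1}$ to $a^2\sigma_1a^{-2}=\sigma_{n-1}^{-1}$, which does follow from \rerem{permute}(\ref{it:permsi}); you thereby avoid both \rerem{permute}(\ref{it:permrhi}) and the general-$i$ case of \relem{conjri}, which the paper uses to compute $\garside a^{-1}\cdot b\cdot a\garside^{-1}$ directly. Note, however, that your part~(b) still invokes part~(a), so the missing computation above remains the one genuine gap in the whole proposal.
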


Before proving \repr{realdic}, let us state and prove the following useful lemma.
\begin{lem}\label{lem:conjri}
For all $1\leq i\leq n$, $\garside^{-1} \rho_{i} \garside=\rho_{n+1-i}^{-1}$.
\end{lem}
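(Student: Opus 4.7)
The plan is to argue by induction on $i$, with the substantive content in the base case $i=1$.

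For the inductive step, assume $\garside^{-1}\rh{i}\garside=\rhi{n+1-i}$. Conjugating the relation $\rh{i+1}=\sii{i}\rh{i}\sii{i}$ of \req{sirisi} by $\garside^{-1}(\cdot)\garside$ and using $\garside^{-1}\si{i}\garside=\si{n-i}$ from \req{propgarside} gives $\garside^{-1}\rh{i+1}\garside=\sii{n-i}\rhi{n+1-i}\sii{n-i}$. Applying \req{sirisi} with $n-i$ in place of $i$, one has $\rh{n+1-i}=\sii{n-i}\rh{n-i}\sii{n-i}$, which upon inversion implies $\sii{n-i}\rhi{n+1-i}\sii{n-i}=\rhi{n-i}$. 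Hence $\garside^{-1}\rh{i+1}\garside=\rhi{n-i}=\rhi{n+1-(i+1)}$, as required.

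The base case $i=1$ rests on the factorisation $\garside=(\sigma_1\cdots\sigma_{n-1})\cdot\garside[n-1]$, immediate from \req{garside}, where $\garside[n-1]=(\sigma_1\cdots\sigma_{n-2})(\sigma_1\cdots\sigma_{n-3})\cdots(\sigma_1\sigma_2)\sigma_1$ is the half-twist on the first $n-1$ strands. Since $\garside[n-1]$ is a word in $\sigma_1,\ldots,\sigma_{n-2}$, and each of these commutes with $\rh{n}$ by \req{sirj}, $\garside[n-1]$ commutes with $\rh{n}$. Consequently, it suffices to prove that $\ssni{n-1}{1}\rh{1}\ssn{1}{n-1}=\rhi{n}$.

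To establish this, iterating the rearranged relation $\sii{k}\rh{k}=\rh{k+1}\si{k}$ (equivalently, by \req{rjr1}) yields $\ssni{n-1}{1}\rh{1}=\rh{n}\ssn{n-1}{1}$. Multiplication on the right by $\ssn{1}{n-1}$ then gives
\[
\ssni{n-1}{1}\rh{1}\ssn{1}{n-1}=\rh{n}\cdot\ssn{n-1}{1}\ssn{1}{n-1}=\rh{n}\cdot\rhi[2]{n}=\rhi{n},
\]
where the second equality is \req{rn2}. This completes the base case, and with it the proof of the lemma.

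The only step requiring genuine insight is the factorisation $\garside=(\sigma_1\cdots\sigma_{n-1})\garside[n-1]$: the first factor, combined with \req{rjr1} and \req{rn2}, produces $\rhi{n}$, while $\garside[n-1]$ commutes with $\rh{n}$. Once this decomposition is noticed, the remainder is a routine application of the Van Buskirk relations.
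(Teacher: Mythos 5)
Your proof is correct and takes essentially the same route as the paper's: an induction on $i$ with an identical inductive step, and a base case resting on the same decomposition of $\garside$, the identity \reqref{rjr1} (which you rederive by iterating \req{sirisi}), \req{rn2}, and the commutation \req{sirj}. The differences are purely presentational.
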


\begin{proof}[Proof of \relem{conjri}.]
We prove the result by induction on $i$. First let $i=1$. Using \req{garside}, we have 
\begin{align*}
\garside^{-1} \rho_{1} \garside=& 
(\sigma_{1})^{-1} (\sigma_{1}\sigma_{2})^{-1} \cdots (\sigma_{1}\cdots \sigma_{n-2})^{-1}
\sigma_{n-1}^{-1}\cdots \sigma_{1}^{-1} \cdot\rho_{1} \cdot \sigma_{1}^{-1}\cdots \sigma_{n-1}^{-1}
\cdot \sigma_{n-1}\cdots \sigma_{1} \cdot\\
& (\sigma_{1}\cdots \sigma_{n-1}) (\sigma_{1}\cdots
\sigma_{n-2}) \cdots (\sigma_{1}\sigma_{2})(\sigma_{1})\\
=& (\sigma_{1})^{-1} (\sigma_{1}\sigma_{2})^{-1} \cdots (\sigma_{1}\cdots \sigma_{n-2})^{-1}
\rho_{n}
\cdot \rho_{n}^{-2}\cdot\\
& (\sigma_{1}\cdots
\sigma_{n-2}) \cdots (\sigma_{1}\sigma_{2})(\sigma_{1}) \quad\text{by equations~\reqref{rjr1}
and~\reqref{rn2}}\\
=& \rho_{n}^{-1} \quad\text{by \req{sirj}.}
\end{align*}
Now suppose that the result is true for $1\leq i\leq n-1$. By \req{sirisi} we have 
\begin{equation}\label{eq:rhii}
\rho_{i}^{-1}= \sigma_{i}^{-1} \rho_{i+1}^{-1} \sigma_{i}^{-1}.
\end{equation}
So
\begin{align*}
\garside^{-1} \rho_{i+1} \garside&= \garside^{-1} \sigma_{i}^{-1} \rho_{i} \sigma_{i}^{-1}
\garside \quad\text{by \req{sirisi}}\\
&= \sigma_{n-i}^{-1} \rho_{(n-i)+1}^{-1} \sigma_{n-i}^{-1} \quad\text{by induction and \req{propgarside}}\\
&= \rho_{n-i}^{-1} \quad\text{by \req{rhii}.}
\end{align*}
The result follows by induction.
\end{proof}

\begin{proof}[Proof of \repr{realdic}.]\mbox{}
\begin{enumerate}[(a)]
\item\label{it:dic8n} We know that $a$ is of order $4n$, $\garside$ is of order $4$, and that $a^{2n}=\garside^2=\ft$ by \repr{agt}(\ref{it:agt2}). By \relem{conjri}, we obtain
\begin{align*}
\garside a \garside^{-1}&= \garside \sigma_{n-1}^{-1}\cdots \sigma_{1}^{-1} \rho_{1}
\garside^{-1}\\
&= \sigma_{1}^{-1}\cdots \sigma_{n-1}^{-1} \rho_{n}^{-1}= (\rho_{n}\sigma_{n-1} \cdots
\sigma_{1})^{-1}=a^{-1} \quad\text{by \req{rjr1}.}
\end{align*}
This proves that the subgroup $\ang{a,\garside}$ of $B_{n}(\rp)$ is isomorphic to a quotient of $\dic{8n}$. Now $\garside$ is a non-pure braid of order $4$, and the elements of $\ang{a}$ of order $4$ are $a^{\pm n}$ which are pure braids by \req{powerab}. This implies that $\ang{a}\cap \ang{\garside}=\varnothing$, thus $\ang{a,\garside}$ contains at least $4n+1$ distinct elements, and so is isomorphic to $\dic{8n}$.

\item We know that $b$ is of order $4(n-1)$. Moreover, since $\ang{a,\garside}\cong \dic{8n}$ and $a$ generates its subgroup of order $4n$, it follows from standard properties of the dicyclic group that $\garside a^{-1}$ is of order~$4$. 
Again by \repr{agt}(\ref{it:agt2}), $b^{2(n-1)}=(\garside a^{-1})^2=\ft$. Further,
\begin{align*}
\garside a^{-1} \cdot b \cdot a \garside^{-1}&= \garside a^{-1} \cdot
\sii{n-2}\cdots\sii{1}\cdot\rh{1} \cdot a \garside^{-1}\\
&= \garside \sii{n-1}\cdots\sii{2}\rh{2} \garside^{-1} \quad\text{by Remarks~\ref{rem:permute}(\ref{it:permsi})
and~(\ref{it:permrhi})}\\
&= \sii{1}\cdots\sii{n-2}\rhi{n-1} \quad\text{by \relem{conjri}}\\
&= (\rho_{n-1} \sigma_{n-2} \cdots \sigma_{1})^{-1}=b^{-1} \quad\text{by \req{rjr1}}.
\end{align*}
This proves that the subgroup of $B_{n}(\rp)$ generated by $b$ and $\garside a^{-1}$ is isomorphic
to a quotient of the dicyclic group of order $8(n-1)$. An argument similar to that of~(\ref{it:dic8n}) above shows 
that $\ang{b,\garside a^{-1}}$ contains at least $4(n-1)+1$ distinct elements, and so is isomorphic to $\dic{8(n-1)}$.\qedhere
\end{enumerate}
\end{proof}

\section{The linearity of braid groups of low-genus surfaces}\label{sec:linear}

In this section, we prove \reth{linear}. Using the theorem due to Malcev~\cite{Mal,W} that a group is linear if it contains a finite-index linear subgroup, we first give a short proof of the linearity of $B_{n}(\St)$ different from those in~\cite{Bar,BB}.
\begin{prop}[\cite{Bar,BB}]\label{prop:linear}
Let $n\in \N$. Then $B_{n}(\St)$ is linear.
\end{prop}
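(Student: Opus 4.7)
The plan is to reduce to the linearity of the classical Artin braid group $B_{n-1}$ via two Fadell-Neuwirth fibrations, and then invoke the Bigelow-Krammer linearity theorem together with Malcev's theorem.

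For $n\leq 3$, $B_n(\St)$ is finite (of orders $1$, $2$, and $12$ respectively), hence linear, so I assume from now on that $n\geq 4$. Consider first the Fadell-Neuwirth fibration $F_n(\St) \to F_3(\St)$ obtained by forgetting the last $n-3$ coordinates; its fibre is $F_{n-3}(\St \setminus \{3 \text{ points}\})$. The classical identification $F_3(\St) \simeq \rp[3]$ (coming, for instance, from the simply transitive action of the M\"obius group on ordered triples of distinct points on $\St$) shows that $\pi_2(F_3(\St)) = 0$, so the long exact sequence in homotopy truncates to the short exact sequence
\begin{equation*}
1 \to P_{n-3}(\St \setminus \{3 \text{ points}\}) \to P_n(\St) \to P_3(\St) \to 1.
\end{equation*}
Since $P_3(\St) \cong \Z_2$, the subgroup $K := P_{n-3}(\St \setminus \{3 \text{ points}\})$ has finite index $2\cdot n!$ in $B_n(\St)$.

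Now $\St$ minus three points is homeomorphic to $\dt$ minus two interior points, so $K \cong P_{n-3}(\dt \setminus \{2 \text{ points}\})$. A second Fadell-Neuwirth fibration $F_{n-1}(\dt) \to F_2(\dt)$, with $F_2(\dt) \simeq \St[1]$ (aspherical above dimension one), yields the short exact sequence
\begin{equation*}
1 \to P_{n-3}(\dt \setminus \{2 \text{ points}\}) \to P_{n-1} \to P_2 \to 1,
\end{equation*}
embedding $K$ into the Artin pure braid group $P_{n-1} \subset B_{n-1}$. By Bigelow-Krammer, $B_{n-1}$ is linear, hence so is $K$; by Malcev's theorem, the overgroup $B_n(\St)$, which contains $K$ with finite index, is linear as well.

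The main obstacle is ensuring short-exactness of the two Fadell-Neuwirth sequences on the right, which depends on the classical homotopy computations $F_3(\St) \simeq \rp[3]$ and $F_2(\dt) \simeq \St[1]$; these kill the potential obstruction coming from the $\pi_2$ of the base of each fibration, and are standard but must be cited carefully. Everything else is essentially bookkeeping with Malcev's theorem and Bigelow-Krammer.
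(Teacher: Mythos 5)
Your proposal is correct and follows essentially the same route as the paper: reduce to the finite-index subgroup $P_{n-3}$ of the thrice-punctured sphere, identify it with $P_{n-3}$ of the twice-punctured disc, embed that in $P_{n-1}$, and conclude via Bigelow--Krammer and Malcev. The only cosmetic difference is that you derive the finite-index statement from the Fadell--Neuwirth fibration over $F_3(\St)$ (using $\pi_2(F_3(\St))=0$), whereas the paper cites the stronger direct-product splitting $P_n(\St)\cong P_{n-3}(\St\setminus\brak{x_1,x_2,x_3})\times\Z_2$.
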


\begin{proof}
If $n\leq 3$ then $B_{n}(\St)$ is finite, and so is linear. Assume then that $n\geq 4$. Since $P_{n}(\St)\cong P_{n-3}(\St\setminus \brak{x_{1},x_{2},x_{3}}) \times \Z_{2}$ by~\cite[Theorem 4]{GG1} and $P_{n}(\St)$ is of finite index in $B_{n}(\St)$, it suffices to prove the linearity of $P_{n-3}(\St\setminus \brak{x_{1},x_{2},x_{3}})$. Now by~\cite[Proposition 2.5]{GG8}, $P_{n-3}(\St\setminus \brak{x_{1},x_{2},x_{3}})$ is isomorphic to $P_{n-3}(\dt\setminus \brak{x_{1},x_{2}})$, which in turn is isomorphic to a subgroup of $P_{n-1}$. The linearity of $B_{n-1}$ implies that of $P_{n-3}(\St\setminus \brak{x_{1},x_{2},x_{3}})$ as required.
%
\end{proof}

\begin{proof}[Proof of \reth{linear}.]\mbox{}
\begin{enumerate}[(a)]
\item Let $n\in \N$, and let $M$ be a compact, connected surface. First, suppose that $M$ is orientable of genus zero. Then $M$ is homeomorphic to $\St$ with a finite number, $r\geq 0$ say, of disjoint open discs removed. If $r=0$ then we are in the case of $\St$ which follows from~\cite{Bar,BB} or from \repr{linear}, while if $r\geq 1$, $B_{n}(M)$ is isomorphic to the $n\up{th}$ braid group of the disc with $(r-1)$ discs removed. By~\cite{GG8}, it is isomorphic a subgroup of $B_{n+r-1}$, so is linear. Now suppose that $M$ is non-orientable of genus $1$. Then it is homeomorphic to $\rp$ with a finite number of disjoint open discs removed. By \reco{embed}, $B_{n}(M)$ embeds in $B_{2n}(\widetilde{M})$, where $\widetilde{M}$ is the orientable double covering of $M$. But $\widetilde{M}$ is of genus zero, and the result follows from the previous case.
\item Let $x\in \mathbb{T}^2$ and let $n\in \N$. By~\cite[Lemma 17]{BGG}, $P_{n+1}(\mathbb{T}^2)$ is isomorphic to the direct product of $\Z^2$ with $P_{n}(\mathbb{T}^2\setminus\brak{x})$. The linearity of $\Z^2$, the fact that the $n\up{th}$ pure braid group is of finite index in the corresponding $n\up{th}$ braid group and Malcev's theorem then imply the first statement. The second statement is a consequence of the first, noting that $P_{1}(\mathbb{T}^2\setminus\brak{x})$ is a free group of rank $2$.\qedhere
\end{enumerate}
\end{proof}


\begin{thebibliography}{BGG}
{\small


\bibitem[AM]{AM} A.~Adem and R.~J.~Milgram, Cohomology of finite
groups, Springer-Verlag, New York-Heidelberg-Berlin (1994).

\bibitem[A1]{A1} E.~Artin, Theorie der Z\"opfe, \emph{Abh.\ Math.\ Sem.\ Univ.\ Hamburg} \textbf{4} (1925), 47--72.

\bibitem[A2]{A2} E.~Artin, Theory of braids, \emph{Ann.\ Math.} \textbf{48} (1947), 101--126.

\bibitem[Ba]{Bar} V.~Bardakov, The structure of the group of conjugating automorphisms and the linear representation of the braid groups of some manifolds, \emph{Siberian Math.\ J.} \textbf{46} (2005), 13--23.


\bibitem[BGG]{BGG} P.~Bellingeri, S.~Gervais and J.~Guaschi, Lower central series of Artin-Tits and surface braid groups, \emph{J.~Algebra} \textbf{319} (2008), 1409--1427.

\bibitem[Bi]{Big} S.~J.~Bigelow, Braid groups are linear, \emph{J.\ Amer.\ Math.\ Soc.} \textbf{14} (2001),
471--486.

\bibitem[BB]{BB} S.~J.~Bigelow and R.~D.~Budney, The mapping class group of a genus two surface is linear, \emph{Algebraic and Geometric Topology} \textbf{1} (2001), 699--708.

\bibitem[Br]{Br} K.~S.~Brown, Cohomology of groups, Graduate Texts in Mathematics \textbf{87}, Springer-Verlag, New York-Berlin, 1982.

\bibitem[BCG]{BCG} E.~Bujalance, F.~J.~Cirre and J.~M.~Gamboa, Automorphism groups of the real projective plane with holes and their conjugacy classes within its mapping class group, \emph{Math. Ann.} \textbf{332} (2005), 253--275.

\bibitem[CG]{CG} F.~R.~Cohen and S.~Gitler, On loop spaces of configuration spaces, \emph{Trans.\ Amer.\ Math.\ Soc.} \textbf{354} (2002), 1705--1748.

\bibitem[E]{E} S.~Eilenberg, Sur les transformations périodiques de la surface de la sphère, \emph{Fund.\ Math.} \textbf{22} (1934), 28--41.

\bibitem[FH]{FH} E.~Fadell and S.~Y.~Husseini,  Geometry and topology of configuration spaces, Springer Monographs in Mathematics, Springer-Verlag, Berlin, 2001.


\bibitem[FoN]{FoN} R.~H.~Fox and L.~Neuwirth, The braid groups,
\emph{Math.\ Scandinavica} \textbf{10} (1962), 119--126.

\bibitem[GG1]{GG1} D.~L.~Gon\c{c}alves and J.~Guaschi, The roots of
the full twist for surface braid groups, \emph{Math.\ Proc.\ Camb.\ Phil.\
Soc.} \textbf{137} (2004), 307--320.

\bibitem[GG2]{GG2} D.~L.~Gon\c{c}alves and J.~Guaschi, The braid groups of the projective plane,
\emph{Algebraic and Geometric Topology} \textbf{4} (2004), 757--780.


\bibitem[GG3]{GG4} D.~L.~Gon\c{c}alves and J.~Guaschi, The braid groups of the projective plane and the Fadell-Neuwirth short exact sequence, \emph{Geom.\ Dedicata} \textbf{130} (2007), 93--107.

\bibitem[GG4]{GG5} D.~L.~Gon\c{c}alves and J.~Guaschi, The quaternion group as a subgroup of the sphere braid groups, \emph{Bull.\ London Math.\ Soc.} \textbf{39} (2007), 232--234.

\bibitem[GG5]{GG6} D.~L.~Gon\c{c}alves and J.~Guaschi, The classification and the conjugacy classes of the finite subgroups of the sphere braid groups, \emph{Algebraic and Geometric Topology} \textbf{8} (2008), 757--785.


\bibitem[GG6]{GG8} D.~L.~Gon\c{c}alves and J.~Guaschi, The lower
central and derived series of the braid groups of the finitely-punctured sphere, \emph{J.~Knot Theory and its Ramifications} \textbf{18} (2009), 651--704.

\bibitem[GG7]{GG9} D.~L.~Gon\c{c}alves and J.~Guaschi, Classification of the virtually cyclic subgroups of the pure braid groups of the projective plane, \emph{J. Group Theory}, to appear.



\bibitem[Ke]{K} S.~P.~Kerckhoff, The Nielsen realization problem, \emph{Bull.\ Amer.\ Math.\ Soc.} \textbf{2} (1980), 452--454. 

\bibitem[vK]{vK} B.~von Kerékj\'art\'o, \"Uber die periodischen Transformationen der Kreisscheibe und der
Kugelfl\"ache, \emph{Math.\ Ann.} \textbf{80} (1919), 36--38.

\bibitem[Ko]{Kor} M.~Korkmaz, On the linearity of certain mapping class groups, \emph{Turkish
J.\ Math.} \textbf{24} (2000), 367--371.

\bibitem[Kr1]{Kr1} D.~Krammer, The braid group $B_4$ is linear, \emph{Invent.\ Math.} \textbf{142} (2000),
451--486.

\bibitem[Kr2]{Kr2} D.~Krammer, Braid groups are linear, \emph{Ann.\ Math.} \textbf{155} (2002), 131--156.

\bibitem[Mac]{Mac} S.~Mac~Lane, Homology, Die Grundlehren der mathematischen Wissenschaften, Bd.\ 114,
Academic Press Inc., New York, 1963.

\bibitem[Mal]{Mal} A.~I.~Malcev, On isomorphic matrix representations of infinite groups, \emph{Rec. Math. [Mat. Sbornik] N.S.} \textbf{8} (1940), 405--422.

\bibitem[Mu]{M} K.~Murasugi, Seifert fibre spaces and braid groups, \emph{Proc.\ London Math.\ Soc.}
\textbf{44} (1982), 71--84.

\bibitem[PR]{PR} L.~Paris and D.~Rolfsen, Geometric subgroups of surface braid groups,
\emph{Ann.\ Inst.\ Fourier} \textbf{49} (1999), 417--472.

\bibitem[Sc]{Sc} G.~P.~Scott, Braid groups and the group of homeomorphisms of a surface, \emph{Proc.\ Camb.\ Phil.\ Soc.} \textbf{68} (1970), 605--617.


\bibitem[W]{W} B.~A.~F.~Wehrfritz, Infinite linear groups, An account of the group-theoretic
properties of infinite groups of matrices, Ergebnisse der Matematik und ihrer Grenzgebiete, Band 76, Springer-Verlag, New York, 1973.

\bibitem[VB]{VB} J.~Van~Buskirk, Braid groups of compact
$2$-manifolds with elements of finite order, \emph{Trans.\ Amer.\
Math.\ Soc.} \textbf{122} (1966), 81--97.

\bibitem[Z]{Z} O.~Zariski, The topological discriminant group of a Riemann surface of genus $p$, 
\emph{Amer.\ J.\ Math.} \textbf{59} (1937), 335--358.

}

\end{thebibliography}
\end{document}